\newtheorem{remark}{Remark}
\newtheorem{theorem}[remark]{Theorem}
\newtheorem{proposition}[remark]{Proposition}
\newtheorem{corollary}[remark]{Corollary}
\newcommand{\ter}{\operatorname{ter}}
\title{On the complexity of computing the $k$-metric dimension of graphs}
\author{Ismael G. Yero$^{(1)}$, Alejandro Estrada-Moreno$^{(2)}$\\ and Juan A.
Rodr\'{\i}guez-Vel\'{a}zquez$^{(2)}$
    \\
    $^{(1)}${\small Departamento de Matem\'aticas, Escuela Polit\'ecnica Superior de Algeciras}\\
{\small Universidad de C\'adiz,} {\small
Av. Ram\'on Puyol s/n, 11202 Algeciras, Spain.} \\ {\small
ismael.gonzalez\@@uca.es}\\
$^{(2)}${\small Departament d'Enginyeria Inform\`atica i Matem\`atiques,}\\
{\small Universitat Rovira i Virgili,}  {\small Av. Pa\"{\i}sos
Catalans 26, 43007 Tarragona, Spain.} \\{\small
alejandro.estrada\@@urv.cat, juanalberto.rodriguez\@@urv.cat}
}
\begin{document}

\maketitle

\begin{abstract}
Given a connected graph $G=(V,E)$, a set $S\subseteq V$ is a $k$-metric generator for $G$ if for any two different vertices $u,v\in V$, there exist at least $k$ vertices $w_1,...,w_k\in S$ such that $d_G(u,w_i)\ne d_G(v,w_i)$ for every $i\in \{1,...,k\}$. A metric generator of minimum cardinality is called a $k$-metric basis and its cardinality the $k$-metric dimension of $G$. We study some problems regarding the complexity of some $k$-metric dimension problems. For instance, we show that the problem of computing the $k$-metric dimension of graphs is $NP$-Complete. However, the problem is solved in linear time for the particular case of trees.
\end{abstract}

{\it Keywords:} $k$-metric dimension; $k$-metric dimensional graph; metric dimension; $NP$-complete problem; Graph algorithms.

{\it AMS Subject Classification Numbers:}   68Q17; 05C05; 05C12;	05C85.

\section{Introduction}

Let $\mathbb{R}_{\geq 0}$ denote  the set of nonnegative real numbers.
A \emph{metric space} is a pair $(X,d)$, where $X$ is a set of points and $d:X\times X\to \mathbb{R}_{\geq 0}$ satisfies
$d(x,y)=0$ if and only if $x=y$, $d(x,y)=d(y,x)$  for all $x,y \in X$ and $d(x,y)\le d(x,z)+d(z,y)$  for all $x,y,z\in X$.
A \emph{generator} of a metric space $(X,d)$ is a set $S$ of points in the space  with the property that every point of the space is uniquely determined by the distances from the elements of $S$.
A point $v\in X$ is said to \emph{distinguish} two points $x$ and $y$ of $X$ if $d(v,x)\ne d(v,y)$.
Hence, $S$ is a generator if and only if any pair of points of $X$ is distinguished by some element of $S$.

Let $\mathbb{N}$ denote the set of nonnegative integers.
Given a connected graph $G=(V,E)$, we consider the function $d_G:V\times V\rightarrow \mathbb{N}$, where $d_G(x,y)$ is the length of a shortest path between $u$ and $v$. Clearly, $(V,d_G)$ is a metric space.
The diameter of a graph is understood in this metric.

A vertex set $S\subseteq V$ is said to be a \emph{metric generator} for $G$ if it is a generator of the metric space $(V,d_G)$.
A minimum metric generator is called a \emph{metric basis}, and
its cardinality the \emph{metric dimension} of $G$, denoted by $\dim(G)$.
Motivated by some problems regarding unique location of intruders in a network, the concept of metric
dimension of a graph was introduced by Slater in \cite{Slater1975}, where the metric generators were called \emph{locating sets}. The concept of metric dimension of a graph was also introduced by Harary and Melter in \cite{Harary1976}, where metric generators were called \emph{resolving sets}. Applications
of this invariant to the navigation of robots in networks are discussed in \cite{Khuller1996} and applications to chemistry in \cite{Johnson1993,Johnson1998}.  This graph parameter was studied further in a number
of other papers including, for instance \cite{Bailey2011, Caceres2007, Chartrand2000, Kuziak2013, Melter1984, Yero2011}.
Several variations of metric generators including resolving dominating sets \cite{Brigham2003}, locating dominating sets \cite{Rall1984}, independent resolving sets \cite{Chartrand2003}, local metric sets \cite{Okamoto2010}, strong resolving sets \cite{Sebo2004}, etc. have been introduced and studied.

From now on we consider an extension of the concept of metric generators introduced in \cite{Estrada-Moreno2013} by the authors of this paper. Given a simple and connected graph $G=(V,E)$, a set $S\subseteq V$ is said to be a \emph{$k$-metric generator} for $G$ if and only if any pair of vertices of $G$ is distinguished by at least $k$ elements of $S$, {\em i.e.}, for any pair of different vertices $u,v\in V$, there exist at least $k$ vertices $w_1,w_2,...,w_k\in S$ such that
\begin{equation}\label{conditionDistinguish}
d_G(u,w_i)\ne d_G(v,w_i),\; \mbox{\rm for every}\; i\in \{1,...,k\}.
\end{equation}
A $k$-metric generator of minimum cardinality in $G$ is  called a \emph{$k$-metric basis} and its cardinality the \emph{$k$-metric dimension} of $G$, which is denoted by $\dim_{k}(G)$, \cite{Estrada-Moreno2013}. Note that every $k$-metric generator $S$ satisfies that $|S|\geq k$ and, if $k>1$, then $S$ is also a $(k-1)$-metric generator. Moreover,   $1$-metric generators are the standard metric generators (resolving sets or locating sets as defined in \cite{Harary1976} or \cite{Slater1975}, respectively). Notice that if $k=1$, then the problem of checking if a set $S$ is a metric generator reduces to check condition (\ref{conditionDistinguish}) only for those vertices $u,v\in V- S$, as every vertex in $S$ is distinguished at least by itself. Also, if $k=2$, then condition (\ref{conditionDistinguish}) must be checked only for those pairs having at most one vertex in $S$, since two vertices of $S$ are distinguished at least by themselves. Nevertheless, if $k\ge 3$, then condition (\ref{conditionDistinguish}) must be checked for every pair of different vertices of the graph.

In this article we show the NP-Hardness of the problem of computing the $k$-metric dimension of graphs. To do so, we first prove that the decision problem regarding whether $\dim_{k}(G)\le r$ for some graph $G$ and some integer $r\ge k+1$ is $NP$-complete. The particular case of trees is separately addressed, based on the fact that for trees, the problem mentioned above becomes polynomial. We say that a connected graph $G$ is \emph{$k$-metric dimensional} if $k$ is the largest integer such that there exists a $k$-metric basis for $G$.   We also show that
 the problem of finding the integer $k$ such that a graph $G$  is $k$-metric dimensional can be solved in polynomial time.
The reader is referred to \cite{Estrada-Moreno2013} for combinatorial results on the $k$-metric dimension,  including
tight bounds and some closed formulae. The article is organized as follows. In Section \ref{k-metric-dimensional}
we analyze the problem of computing the largest integer $k$ such that there exists a $k$-metric basis. In Section \ref{k-metric-dimension-problem} we show that the decision problem regarding whether the $k$-metric dimension of a graph does not exceed a positive integer is $NP$-complete, which gives also the NP-Hardness of  computing $\dim_{k}(G)$ for any graph $G$. The procedure of such proof is done by using some similar techniques like those ones already presented in \cite{Khuller1996} while studying the computational complexity problems related to the standard metric dimension of graphs.
Finally, in Section \ref{k-metric-dimension-tree} we give an algorithm for determining the value of $k$ such that a tree is $k$-metric dimensional and present two algorithms for computing the $k$-metric dimension and obtaining a $k$-metric basis of any tree. We also show that all algorithms presented in this section run in linear time.

Throughout the paper, we use the notation $S_{1,n-1}$, $C_n$ and $P_n$ for star graph, cycle graphs and path graphs of order $n$, respectively. For a vertex $v$ of a graph $G$, $N_G(v)$ denote the set of neighbors or \emph{open neighborhood} of $v$ in $G$. The \emph{closed neighborhood}, denoted by $N_G[v]$, equals $N_G(v) \cup \{v\}$. If there is no ambiguity, we simple write $N(v)$ or $N[v]$. We also refer to the degree of $v$ as $\delta(v)=|N(v)|$. The minimum and maximum degrees of $G$ are denoted by $\delta(G)$ and $\Delta(G)$, respectively.

In this work  the remain definitions are given the first time that the concept is found in the text.

\section{$k$-metric dimensional graphs.}\label{k-metric-dimensional}

Notice that if a graph $G$ is a $k$-metric dimensional graph, then for every positive integer $k'\le k$, $G$ has at least  a $k'$-metric basis. Since for every pair of vertices $x,y$ of a graph $G$ we have that they are distinguished at least by themselves, it follows that the whole vertex set $V(G)$ is a $2$-metric generator for $G$ and, as a consequence it follows that every graph $G$ is $k$-metric dimensional for some $k\ge 2$. On the other hand, for any connected graph $G$ of order $n>2$ there exists at least one vertex $v\in V(G)$ such that $\delta(v)\ge 2$. Since $v$ does not distinguish any pair $x,y\in N_G(v)$, there is no $n$-metric dimensional graph of order $n>2$.

\begin{remark}\label{remarkKMetric}
Let $G$ be a $k$-metric dimensional graph of order $n$. If $n\ge 3$ then, $2\le k\le n-1$. Moreover, $G$ is $n$-metric dimensional if and only if $G\cong K_2$.
\end{remark}

Next we present a characterization of $k$-metric dimensional graphs already known from \cite{Estrada-Moreno2013}. To this end, we need  some additional terminology. Given two vertices $x,y\in V(G)$, we say that the set of \textit{distinctive vertices} of $x,y$ is $${\cal D}_G(x,y)=\{z\in V(G): d_{G}(x,z)\ne d_{G}(y,z)\}.$$


\begin{theorem}{\em \cite{Estrada-Moreno2013}}\label{theokmetric}
A graph  $G$ is $k$-metric dimensional if and only if $k=\displaystyle\min_{x,y\in V(G)}\vert {\cal D}_G(x,y)\vert .$
\end{theorem}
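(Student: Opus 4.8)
The plan is to prove the two implications of the characterization using the definition of a $k$-metric generator directly, the central observation being that for any two distinct vertices $x,y\in V(G)$, a set $S$ distinguishes $x$ and $y$ by at least $k$ elements if and only if $|S\cap {\cal D}_G(x,y)|\ge k$. Hence $S$ is a $k$-metric generator if and only if $|S\cap {\cal D}_G(x,y)|\ge k$ for every pair $x\ne y$. Let me write $m=\min_{x,y\in V(G)}|{\cal D}_G(x,y)|$, where the minimum is over distinct pairs.

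First I would show that $G$ admits an $m$-metric generator, which forces the largest such $k$ to be at least $m$. For this, I would simply take $S=V(G)$: for every pair $x\ne y$ we have $|V(G)\cap {\cal D}_G(x,y)|=|{\cal D}_G(x,y)|\ge m$ by definition of $m$, so $V(G)$ is an $m$-metric generator. (Note $S\cap{\cal D}_G(x,y)={\cal D}_G(x,y)$ since ${\cal D}_G(x,y)\subseteq V(G)$.) Thus the largest $k$ for which a $k$-metric basis exists satisfies $k\ge m$.

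Next I would show the reverse inequality: no $k$-metric generator exists for $k>m$. Let $x_0,y_0$ be a pair attaining the minimum, so $|{\cal D}_G(x_0,y_0)|=m$. For any set $S\subseteq V(G)$, the number of vertices of $S$ distinguishing $x_0$ from $y_0$ is $|S\cap {\cal D}_G(x_0,y_0)|\le |{\cal D}_G(x_0,y_0)|=m$. Hence $S$ cannot be a $k$-metric generator for any $k>m$, and in particular no $(m+1)$-metric basis exists. Combining the two parts, $m$ is exactly the largest integer $k$ such that $G$ has a $k$-metric basis, i.e., $G$ is $m$-metric dimensional, which is the claim.

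This argument is essentially self-contained and the only mild subtlety — the "main obstacle," such as it is — is bookkeeping around the edge cases and the quantifiers: one must make sure the minimum in the statement is understood over \emph{distinct} pairs (otherwise ${\cal D}_G(x,x)=\emptyset$ trivializes it), and one should observe that the construction $S=V(G)$ is legitimate because, as noted in the text preceding Remark~\ref{remarkKMetric}, every graph is $k$-metric dimensional for some $k\ge 2$, so $m\ge 2$ and such generators are nonempty and meaningful. Everything else is a direct unwinding of the definition of "distinguish" and of "$k$-metric generator."
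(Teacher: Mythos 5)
Your proof is correct and complete: the reformulation ``$S$ is a $k$-metric generator iff $|S\cap\mathcal{D}_G(x,y)|\ge k$ for every pair of distinct vertices'' immediately gives both that $V(G)$ is an $m$-metric generator for $m=\min_{x\ne y}|\mathcal{D}_G(x,y)|$ and that no $(m+1)$-metric generator can exist, which is exactly the claim. Note that the paper does not prove this theorem at all --- it is imported from \cite{Estrada-Moreno2013} --- so there is no internal proof to compare against; your argument is the standard direct unwinding of the definitions, and your remarks about restricting the minimum to distinct pairs and about $m\ge 2$ are the right points of care.
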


Now we consider the problem of finding the integer $k$ such that a graph $G$ of order $n$ is $k$-metric dimensional.

$$\begin{tabular}{|l|}
  \hline
  \mbox{$k$-DIMENSIONAL GRAPH PROBLEM}\\
  \mbox{INSTANCE: A connected graph $G$ of order $n\ge 3$}\\
  \mbox{PROBLEM: Find the integer $k$, $2\le k\le n-1$, such that $G$ is $k$-metric dimensional}\\
  \hline
\end{tabular}$$

\begin{remark}\label{remark_K-Dimensional}
Let $G$ be a connected graph of order $n\geq 3$. The time complexity of computing the value $k$ for which $G$ is $k$-metric dimensional is $O(n^3)$.
\end{remark}

\begin{proof}
We can initially compute the distance matrix $\mathrm{DistM}_G$, by using the well-known Floyd-Warshall algorithm, whose time complexity is $O(n^3)$. The distance matrix $\mathrm{DistM}_G$ is symmetric of order $n\times n$ whose rows and columns are labeled by vertices, with entries between $0$ and $n-1$. Now observe that $z\in\mathcal{D}_G(x,y)$ if and only if $\mathrm{DistM}_G(x,z)\ne \mathrm{DistM}_G(y,z)$.

Given the matrix $\mathrm{DistM}_G$, the process of computing how many vertices belong to $\mathcal{D}_G(x,y)$ for each of the $\displaystyle\binom{n}{2}$ pairs $x,y\in V(G)$ can be checked in linear time. Therefore, the overall running time of such a process is bounded by the cubic time of the Floyd-Warshall algorithm.
\end{proof}

\section{The $k$-metric dimension problem}\label{k-metric-dimension-problem}

Since the problem of computing the value $k'$ for which a given graph is $k'$-metric dimensional is polynomial, we can study the problem of deciding whether the $k$-metric dimension, $k\le k'$, of $G$ is less than or equal to $r$, for some $r\ge k+1$, {\em i.e.}, the following decision problem.

$$\begin{tabular}{|l|}
  \hline
  \mbox{$k$-METRIC DIMENSION PROBLEM}\\
  \mbox{INSTANCE: A $k'$-metric dimensional graph $G$ of order $n\ge 3$ and integers $k,r$}\\
  \hspace*{2.4cm}\mbox{ such that $1\le k\le k'$ and $k+1\le r\le n$.}\\
  \mbox{QUESTION: Is $\dim_k(G)\le r$?}\\
  \hline
\end{tabular}$$\\

Next we prove that $k$-METRIC DIMENSION PROBLEM is $NP$-complete. The proof that the $k$-METRIC DIMENSION PROBLEM is $NP$-complete for $k=1$ was given by Khuller \emph{et al}. \cite{Khuller1996}. As a kind of generalization of the technique used in \cite{Khuller1996} for $k=1$, we also use a reduction from $3$-SAT in order to analyze the NP-completeness of the $k$-METRIC DIMENSION PROBLEM.

Our problem is clearly in $NP$, since verifying that a given subset $S\subseteq V(G)$ with $k+1\le |S|\le r$ is a $k$-metric generator for a graph $G$, can be done in polynomial time by using some similar procedure like that described in the proof of Remark \ref{remark_K-Dimensional}. In order to present the reduction from $3$-SAT, we need some terminology and notation. From now on, we assume $x_1,...,x_n$ are the variables; $Q_1,...,Q_s$ are the clauses; and $x_1,\overline{x_1},x_2,\overline{x_2},...,x_n,\overline{x_n}$ are the literals, where $x_i$ represents a positive literal of the variable while $\overline{x_i}$ represents a negative literal.

We consider an arbitrary input to $3$-SAT, that is, a boolean formula $\mathcal{F}$ with $n$ variables and $s$ clauses. In this reduction, without loss of generality, we assume that the formula $\mathcal{F}$ has $n\ge 4$ variables. Let $X=\{x_1,x_2,...,x_n\}$ be the set of variables and let $\mathcal{Q}=\{Q_1,Q_2,...,Q_s\}$ be the set of clauses. Now we construct a graph $G_F$ in the following way.
\begin{itemize}
\item For every $x_i\in X$, we take an even cycle $C^i$ of order $4\left\lceil\frac{k}{2}\right\rceil+2$ and we denote by $F_i$ (the false node) and by $T_i$ (the true node) two diametral vertices of $C^i$. Then we denote by $f_i^1,f_i^2,...,f_i^{2\left\lceil\frac{k}{2}\right\rceil}$ the half vertices of $C^i$ closest to $F_i$ and we denote by $t_i^1,t_i^2,...,t_i^{2\left\lceil\frac{k}{2}\right\rceil}$ the half vertices of $C^i$ closest to $T_i$ (see Figure \ref{variable}).
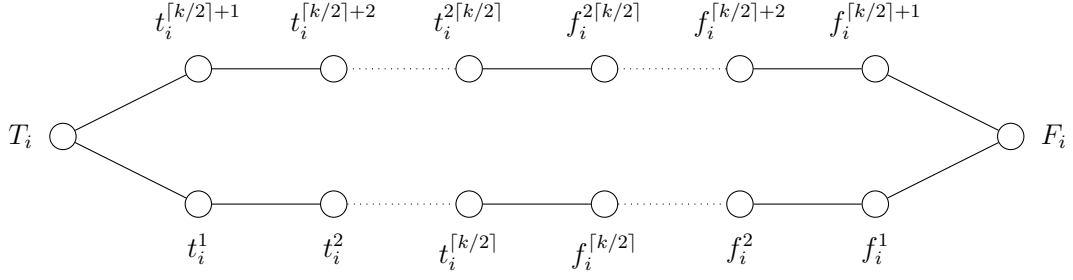
\begin{figure}[!ht]
\centering
\begin{tikzpicture}[scale=.9, transform shape]
\node [draw, shape=circle] (a1) at (0,0) {};
\node [draw, shape=circle] (a2) at (2,-1) {};
\node [draw, shape=circle] (a3) at (4,-1) {};
\node [draw, shape=circle] (a4) at (6,-1) {};
\node [draw, shape=circle] (a5) at (8,-1) {};
\node [draw, shape=circle] (a6) at (10,-1) {};
\node [draw, shape=circle] (a7) at (12,-1) {};
\node [draw, shape=circle] (a8) at (14,0) {};
\node [draw, shape=circle] (a9) at (12,1) {};
\node [draw, shape=circle] (a10) at (10,1) {};
\node [draw, shape=circle] (a11) at (8,1) {};
\node [draw, shape=circle] (a12) at (6,1) {};
\node [draw, shape=circle] (a13) at (4,1) {};
\node [draw, shape=circle] (a14) at (2,1) {};
\foreach \from/\to in {a1/a2, a2/a3, a4/a5, a6/a7, a7/a8, a8/a9, a9/a10, a11/a12, a13/a14, a14/a1}
\draw (\from) -- (\to);
\draw[dotted] (a3) -- (a4);
\draw[dotted] (a5) -- (a6);
\draw[dotted] (a10) -- (a11);
\draw[dotted] (a12) -- (a13);
\draw (-0.3,0) node[left] {$T_i$};
\draw (2,-1.3) node[below] {$t_i^1$};
\draw (4,-1.3) node[below] {$t_i^2$};
\draw (6,-1.3) node[below] {$t_i^{\lceil k/2\rceil}$};
\draw (8,-1.3) node[below] {$f_i^{\lceil k/2\rceil}$};
\draw (10,-1.3) node[below] {$f_i^2$};
\draw (12,-1.3) node[below] {$f_i^1$};
\draw (14.3,0) node[right] {$F_i$};
\draw (2,1.3) node[above] {$t_i^{\lceil k/2\rceil+1}$};
\draw (4,1.3) node[above] {$t_i^{\lceil k/2\rceil+2}$};
\draw (6,1.3) node[above] {$t_i^{2\lceil k/2\rceil}$};
\draw (8,1.3) node[above] {$f_i^{2\lceil k/2\rceil}$};
\draw (10,1.3) node[above] {$f_i^{\lceil k/2\rceil+2}$};
\draw (12,1.3) node[above] {$f_i^{\lceil k/2\rceil+1}$};
\end{tikzpicture}
\caption{The cycle $C^i$ associated to the variable $x_i$.}\label{variable}
\end{figure}
\item For every clause $Q_j\in \mathcal{Q}$, we take a star graph $S_{1,4}$ with central vertex $u_j$ and leaves $u_j^1,u_j^2,u_j^3,u_j^4$. If $k\ge 3$, then we subdivide the edge $u_ju_j^2$ until we obtain a shortest $u_j-u_j^2$ path of order $\left\lceil\frac{k}{2}\right\rceil+1$, as well as, we subdivide the edge $u_ju_j^3$ until we obtain a shortest $u_j-u_j^3$ path of order $\left\lfloor\frac{k}{2}\right\rfloor+1$ (see Figure \ref{clause}). We denote by $P(u_j^2,u_j^3)$ the shortest $u_j^2-u_j^3$ path of length $k$ obtained after subdivision.
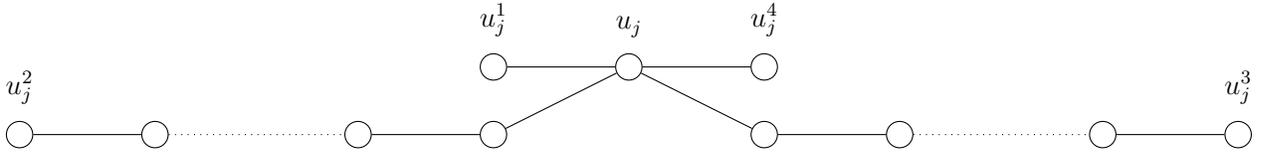
\begin{figure}[h]
\centering
\begin{tikzpicture}[scale=.9, transform shape]
\node [draw, shape=circle] (a1) at (0,0) {};
\node [draw, shape=circle] (a2) at (2,0) {};
\node [draw, shape=circle] (a3) at (5,0) {};
\node [draw, shape=circle] (a4) at (11,0) {};
\node [draw, shape=circle] (a5) at (16,0) {};
\node [draw, shape=circle] (a6) at (18,0) {};
\node [draw, shape=circle] (a7) at (7,1) {};
\node [draw, shape=circle] (a8) at (9,1) {};
\node [draw, shape=circle] (a9) at (11,1) {};
\node [draw, shape=circle] (a10) at (7,0) {};
\node [draw, shape=circle] (a11) at (13,0) {};
\foreach \from/\to in {a1/a2, a10/a8, a7/a8, a8/a9, a8/a4, a5/a6, a10/a3, a4/a11}
\draw (\from) -- (\to);
\draw[dotted] (a2) -- (a3);
\draw[dotted] (a11) -- (a5);
\draw (0,0.3) node[above] {$u_j^2$};
\draw (18,0.3) node[above] {$u_j^3$};
\draw (7,1.3) node[above] {$u_j^1$};
\draw (11,1.3) node[above] {$u_j^4$};
\draw (9,1.3) node[above] {$u_j$};
\end{tikzpicture}
\caption{The subgraph associated to the clause $Q_j$.}\label{clause}
\end{figure}

\item If a variable $x_i$ occurs as a positive literal in a clause $Q_j$, then we add the edges $T_iu_j^1$, $F_iu_j^1$ and $F_iu_j^4$ (see Figure \ref{clause-lit}).
\item If a variable $x_i$ occurs as a negative literal in a clause $Q_j$, then we add the edges $T_iu_j^1$, $F_iu_j^1$ and $T_iu_j^4$ (see Figure \ref{clause-lit}).
\begin{figure}[h]
\centering
\begin{tikzpicture}[scale=.9, transform shape]
\node [draw, shape=circle] (a1) at (0,0) {};
\node [draw, shape=circle] (a2) at (-1.5,0) {};
\node [draw, shape=circle] (a3) at (1.5,0) {};
\node [draw, shape=circle] (a4) at (-3.5,-1) {};
\node [draw, shape=circle] (a5) at (-1.5,-1) {};
\node [draw, shape=circle] (a6) at (1.5,-1) {};
\node [draw, shape=circle] (a7) at (3.5,-1) {};

\draw (0,-0.3) node[below] {$u_j$};
\draw (-1.8,-0.3) node[left] {$u_j^1$};
\draw (1.8,-0.3) node[right] {$u_j^4$};
\draw (-3.8,-1) node[left] {$u_j^2$};
\draw (3.8,-1) node[right] {$u_j^3$};

\node [draw, shape=circle] (a8) at (-9.5,2) {};
\node [draw, shape=circle] (a9) at (-8,2) {};
\node [draw, shape=circle] (a10) at (-7,2) {};
\node [draw, shape=circle] (a11) at (-6,2) {};
\node [draw, shape=circle] (a12) at (-5,2) {};
\node [draw, shape=circle] (a13) at (-3.5,2) {};
\node [draw, shape=circle] (a14) at (-5,3.5) {};
\node [draw, shape=circle] (a15) at (-6,3.5) {};
\node [draw, shape=circle] (a16) at (-7,3.5) {};
\node [draw, shape=circle] (a17) at (-8,3.5) {};

\draw (-9.5,2.3) node[above] {$T_1$};
\draw (-3.5,2.3) node[above] {$F_1$};
\draw (-8,2.2) node[above] {$t_1^1$};
\draw (-7,2.2) node[above] {$t_1^2$};
\draw (-6,2.2) node[above] {$f_1^2$};
\draw (-5,2.2) node[above] {$f_1^1$};
\draw (-8,3.7) node[above] {$t_1^3$};
\draw (-7,3.7) node[above] {$t_1^4$};
\draw (-6,3.7) node[above] {$f_1^4$};
\draw (-5,3.7) node[above] {$f_1^3$};

\node [draw, shape=circle] (a18) at (9.5,2) {};
\node [draw, shape=circle] (a19) at (8,2) {};
\node [draw, shape=circle] (a20) at (7,2) {};
\node [draw, shape=circle] (a21) at (6,2) {};
\node [draw, shape=circle] (a22) at (5,2) {};
\node [draw, shape=circle] (a23) at (3.5,2) {};
\node [draw, shape=circle] (a24) at (5,3.5) {};
\node [draw, shape=circle] (a25) at (6,3.5) {};
\node [draw, shape=circle] (a26) at (7,3.5) {};
\node [draw, shape=circle] (a27) at (8,3.5) {};

\draw (9.5,2.3) node[above] {$F_3$};
\draw (3.5,2.3) node[above] {$T_3$};
\draw (8,2.2) node[above] {$f_3^1$};
\draw (7,2.2) node[above] {$f_3^2$};
\draw (6,2.2) node[above] {$t_3^2$};
\draw (5,2.2) node[above] {$t_3^1$};
\draw (8,3.7) node[above] {$f_3^3$};
\draw (7,3.7) node[above] {$f_3^4$};
\draw (6,3.7) node[above] {$t_3^4$};
\draw (5,3.7) node[above] {$t_3^3$};

\node [draw, shape=circle] (a28) at (-3,5) {};
\node [draw, shape=circle] (a29) at (-1.5,5) {};
\node [draw, shape=circle] (a30) at (-0.5,5) {};
\node [draw, shape=circle] (a31) at (0.5,5) {};
\node [draw, shape=circle] (a32) at (1.5,5) {};
\node [draw, shape=circle] (a33) at (3,5) {};
\node [draw, shape=circle] (a34) at (1.5,6.5) {};
\node [draw, shape=circle] (a35) at (0.5,6.5) {};
\node [draw, shape=circle] (a36) at (-0.5,6.5) {};
\node [draw, shape=circle] (a37) at (-1.5,6.5) {};

\draw (-3,5.3) node[above] {$T_2$};
\draw (3,5.3) node[above] {$F_2$};
\draw (-1.5,5.2) node[above] {$t_2^1$};
\draw (-0.5,5.2) node[above] {$t_2^2$};
\draw (0.5,5.2) node[above] {$f_2^2$};
\draw (1.5,5.2) node[above] {$f_2^1$};
\draw (1.5,6.7) node[above] {$f_2^3$};
\draw (0.5,6.7) node[above] {$f_2^4$};
\draw (-0.5,6.7) node[above] {$t_2^4$};
\draw (-1.5,6.7) node[above] {$t_2^3$};

\foreach \from/\to in {a1/a2, a1/a3, a4/a5, a5/a1, a1/a6, a6/a7, a8/a9, a9/a10, a10/a11, a11/a12, a12/a13, a13/a14, a14/a15, a15/a16, a16/a17, a17/a8,
a18/a19, a19/a20, a20/a21, a21/a22, a22/a23, a23/a24, a24/a25, a25/a26, a26/a27, a27/a18, a28/a29, a29/a30, a30/a31, a31/a32, a32/a33, a33/a34, a34/a35, a35/a36, a36/a37, a37/a28, a2/a8, a2/a13, a3/a13, a2/a28, a2/a33, a3/a28, a2/a18, a2/a23, a3/a23}
\draw (\from) -- (\to);
\end{tikzpicture}
\caption{The subgraph associated to the clause $Q_j=(x_1\vee \overline{x_2} \vee\overline{x_3})$ (taking $k=4$).}\label{clause-lit}
\end{figure}
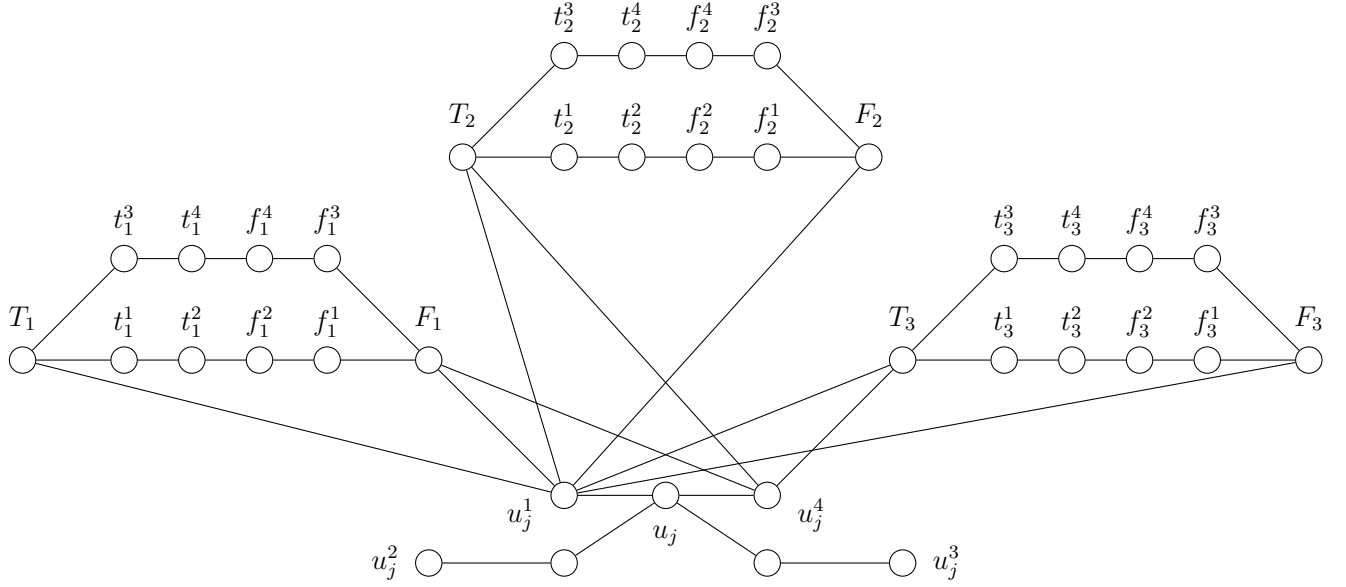
\item Finally, for every $l\in \{1,...,n\}$ such that $x_l$ and $\overline{x_l}$ do not occur in a clause $Q_j$ we add the the edges $T_lu_j^1$, $T_lu_j^4$, $F_lu_j^1$ and $F_lu_j^4$.
\end{itemize}
Notice that the graph $G_F$ obtained from the procedure above has order $n\left(4\left\lceil\frac{k}{2}\right\rceil+2\right)+s(k+3)$. Also, it is straightforward to observe that given the formula $\mathcal{F}$, the graph $G_F$ can be constructed in polynomial time. Next we prove that $\mathcal{F}$ is satisfiable if and only if $\dim_k(G_F)=k(n+s)$. To do so, we first notice some properties of $G_F$.

\begin{remark}\label{rem1}
Let $x_i\in X$. Then there exist two different vertices $a,b\in V(C^i)$ such that they are distinguished only by vertices of the cycle $C^i$ and, as consequence, for any $k$-metric basis $S$ of $G_F$, we have that $|S\cap V(C^i)|\ge k$.
\end{remark}

\begin{proof}
To observe that it is only necessary to take the two vertices of $C^i$ adjacent to $T_i$ or adjacent to $F_i$.
\end{proof}

\begin{remark}\label{rem2}
Let $Q_j\in \mathcal{Q}$. Then there exist two different vertices $x,y$ in the shortest $u_j^2-u_j^3$ path such that they are distinguished only by vertices of the itself shortest $u_j^2-u_j^3$ path and, as consequence, for any $k$-metric basis $S$ of $G_F$, we have that $\vert S\cap V(P(u_j^2,u_j^3))\vert \ge k$.
\end{remark}

\begin{proof}
To observe that it is only necessary to take the two vertices of $P(u_j^2,u_j^3)$  adjacent to $u_j$.
\end{proof}

\begin{proposition}\label{eq-1}
Let $\mathcal{F}$ be an arbitrary input to $3$-SAT problem. Then the graph $G_F$ associated to $\mathcal{F}$ satisfies that $\dim_k(G_F)\ge k(n+s)$.
\end{proposition}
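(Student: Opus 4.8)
The plan is to exhibit $n+s$ pairwise vertex-disjoint subgraphs of $G_F$ on each of which any $k$-metric generator is forced to place at least $k$ vertices, and then to add the resulting lower bounds. The natural candidates are the $n$ variable cycles $C^1,\dots,C^n$ together with the $s$ paths $P(u_1^2,u_1^3),\dots,P(u_s^2,u_s^3)$, for which Remarks \ref{rem1} and \ref{rem2} already supply the ``at least $k$'' estimates.

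First I would check that the $n+s$ vertex sets $V(C^1),\dots,V(C^n),V(P(u_1^2,u_1^3)),\dots,V(P(u_s^2,u_s^3))$ are pairwise disjoint. Distinct variables receive disjoint cycles, so the sets $V(C^i)$ are pairwise disjoint; distinct clauses receive disjoint star gadgets, so the sets $V(P(u_j^2,u_j^3))$ are pairwise disjoint. The only point requiring a little care is that $V(P(u_j^2,u_j^3))$ meets no cycle $V(C^i)$: the vertices of $P(u_j^2,u_j^3)$ are $u_j$, the internal vertices introduced when subdividing the edges $u_ju_j^2$ and $u_ju_j^3$, and the endpoints $u_j^2,u_j^3$; in the construction the only edges joining the gadget of $Q_j$ to a variable cycle are incident with $u_j^1$ or $u_j^4$, and neither of these two vertices lies on $P(u_j^2,u_j^3)$.

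Next I would apply the two remarks. Let $S$ be a $k$-metric basis of $G_F$. By Remark \ref{rem1}, $|S\cap V(C^i)|\ge k$ for every $i\in\{1,\dots,n\}$; by Remark \ref{rem2}, $|S\cap V(P(u_j^2,u_j^3))|\ge k$ for every $j\in\{1,\dots,s\}$. Since the $n+s$ sets involved are pairwise disjoint, these bounds add up, giving
\[
\dim_k(G_F)=|S|\ \ge\ \sum_{i=1}^{n}\bigl|S\cap V(C^i)\bigr|+\sum_{j=1}^{s}\bigl|S\cap V(P(u_j^2,u_j^3))\bigr|\ \ge\ kn+ks=k(n+s).
\]

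There is no genuinely hard step here; the one thing I would be careful about is the disjointness bookkeeping, in particular making sure that a subdivision vertex created inside one clause gadget cannot coincide with a vertex of another gadget or of a cycle. This is immediate, since every vertex of $G_F$ is introduced locally within a single cycle $C^i$ or a single clause gadget, and the construction only ever adds \emph{edges} between these pieces, so no accidental identifications occur.
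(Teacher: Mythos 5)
Your proposal is correct and follows essentially the same route as the paper: the paper's proof likewise invokes Remarks \ref{rem1} and \ref{rem2} to conclude that each of the $n+s$ vertex sets associated to variables and clauses must contain at least $k$ elements of any $k$-metric basis, and then sums. The only difference is that you spell out the pairwise-disjointness bookkeeping explicitly, which the paper leaves implicit.
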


\begin{proof}
As a consequence of Remarks \ref{rem1} and \ref{rem2} we obtain that for every variable $x_i\in X$ and for every clause $Q_j\in \mathcal{Q}$ the set of vertices of $G_F$ associated to each variable or clause, contains at least $k$ vertices of every $k$-metric basis for $G_F$. Thus, the result follows.
\end{proof}

\begin{theorem}
$k$-METRIC DIMENSION PROBLEM is $NP$-complete.
\end{theorem}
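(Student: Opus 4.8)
Here is the strategy I would follow. Membership in $NP$ has already been argued: for a candidate $S\subseteq V(G)$ with $k+1\le|S|\le r$ one computes the distance matrix and checks condition (\ref{conditionDistinguish}) for all pairs in polynomial time, so everything reduces to making the reduction from $3$-SAT work with the graph $G_F$ built above. I would first note that $G_F$ has polynomial size and is constructible in polynomial time, and that, by the choice of the cycle lengths $4\lceil k/2\rceil+2$ and of the clause paths, every pair of vertices of $G_F$ has at least $k$ distinctive vertices; hence $G_F$ is $k'$-metric dimensional for some $k'\ge k$, and $(G_F,k,r)$ with $r:=k(n+s)$ is a legal instance of the $k$-METRIC DIMENSION PROBLEM (one also checks $k+1\le r\le|V(G_F)|$). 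Since $\dim_k(G_F)\ge k(n+s)$ always holds by Proposition \ref{eq-1}, correctness of the reduction amounts to proving that $\mathcal F$ is satisfiable if and only if $\dim_k(G_F)=k(n+s)$.

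For the implication ``$\mathcal F$ satisfiable $\Rightarrow\dim_k(G_F)\le k(n+s)$'', I would fix a satisfying assignment $\phi$ and build an explicit $k$-metric generator $S$ of size exactly $k(n+s)$: for each variable $x_i$ put into $S$ a fixed $k$-element set of vertices of $C^i$ lying on the $T_i$-side when $\phi(x_i)=\mathrm{true}$ and on the $F_i$-side when $\phi(x_i)=\mathrm{false}$ (so that the pole making every occurrence of $x_i$ true is the one surrounded by chosen vertices); and for each clause $Q_j$ put into $S$ the $k$ vertices of $P(u_j^2,u_j^3)$ other than $u_j$. I would then verify (\ref{conditionDistinguish}) for every pair of vertices, organized by the gadgets containing them: both inside a single $C^i$; both inside a single clause gadget; and the two vertices in different gadgets (including the case of a pole). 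In the first two groups the chosen vertices of that gadget already provide $k$ separators for all but one critical pair, namely $\{u_j^1,u_j^4\}$; for this pair one shows that the separators of $G_F$ are essentially the cycle vertices close to a true-making pole of a literal of $Q_j$, and since $\phi$ satisfies $Q_j$ some literal is true, so the $k$ chosen cycle vertices of the corresponding variable separate $\{u_j^1,u_j^4\}$. The remaining cross-gadget pairs are separated with room to spare, since the two vertices then have very different distances to the vertices of either gadget.

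For the converse ``$\dim_k(G_F)\le k(n+s)\Rightarrow\mathcal F$ satisfiable'', I would take a $k$-metric basis $S$ with $|S|=k(n+s)$ (equality is forced by Proposition \ref{eq-1}). Remarks \ref{rem1} and \ref{rem2} give $|S\cap V(C^i)|\ge k$ and $|S\cap V(P(u_j^2,u_j^3))|\ge k$; since the $n+s$ sets $V(C^i)$ and $V(P(u_j^2,u_j^3))$ are pairwise disjoint and, together with the leaves $u_j^1,u_j^4$, cover $V(G_F)$, counting forces $|S\cap V(C^i)|=k$ for every $i$, $|S\cap V(P(u_j^2,u_j^3))|=k$ for every $j$, and $S\cap\{u_j^1,u_j^4:1\le j\le s\}=\emptyset$. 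A short rigidity step (the two neighbours of $T_i$, the two neighbours of $F_i$ in $C^i$, and the two neighbours of $u_j$ on $P(u_j^2,u_j^3)$ can be separated only by vertices of their own gadget) pins $S\cap V(P(u_j^2,u_j^3))$ down to $V(P(u_j^2,u_j^3))\setminus\{u_j\}$ and shows $T_i,F_i\notin S$, so that $S\cap V(C^i)$ is a $k$-subset of the $4\lceil k/2\rceil$ inner vertices of $C^i$, split somehow between the $T_i$-side and the $F_i$-side. Reading off $\phi(x_i)$ from that split and returning to the pair $\{u_j^1,u_j^4\}$, I would argue that if $\phi$ left some clause $Q_j$ unsatisfied then $S$ contains too few admissible separators of $\{u_j^1,u_j^4\}$, contradicting that $S$ is a $k$-metric generator; hence $\phi$ satisfies $\mathcal F$.

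The routine-but-tedious part is the distance bookkeeping inside $G_F$ in the case analysis of the forward direction. The genuine obstacle is the converse: one must show that with the exact budget $k(n+s)$ there is no freedom left, so that satisfying every separation constraint coming from the clause gadgets is possible only when the placement of the cycle vertices encodes a consistent satisfying assignment — and it is precisely here that the special lengths $4\lceil k/2\rceil+2$, $\lceil k/2\rceil$ and $\lfloor k/2\rfloor$ of the gadgets are doing their work.
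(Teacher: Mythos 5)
Your overall plan coincides with the paper's: the same membership argument, the same gadget graph $G_F$, and the same forward direction (building the canonical generator from a satisfying assignment and checking the pairs gadget by gadget). The forward direction as you sketch it is essentially the paper's Cases 1--6 and is fine modulo the distance bookkeeping you defer.

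The genuine gap is in the converse, at exactly the point you flag as ``the genuine obstacle'' but do not resolve. Your counting argument correctly forces $|S\cap V(C^i)|=k$ for each $i$ and $|S\cap V(P(u_j^2,u_j^3))|=k$ for each $j$, and your rigidity step (via the pair of neighbours of $T_i$, whose midpoints are $T_i$ and $F_i$) does show that $S\cap V(C^i)$ is a $k$-subset of the $4\lceil k/2\rceil$ inner vertices. But nothing you invoke forces that $k$-subset to lie entirely on the $t$-side or entirely on the $f$-side, and the final step ``an unsatisfied clause has too few separators'' fails for mixed splits. Concretely: the only vertices of $S$ separating $u_j^1$ from $u_j^4$ are, for each positive literal $x_i$ of $Q_j$, the chosen $t$-vertices of $C^i$, and for each negative literal, the chosen $f$-vertices; so if every cycle is split with $\lceil k/2\rceil$ chosen vertices on its $t$-arc and $\lfloor k/2\rfloor$ on its $f$-arc (which one checks still handles all intra-cycle pairs, since each pair has at most two antipodal non-separating midpoints and all vertices outside $C^i$ see $C^i$ symmetrically through $T_i$ and $F_i$), then every clause pair $\{u_j^1,u_j^4\}$ receives at least $3\lfloor k/2\rfloor\ge k$ separators for every $k\ge 2$, independently of any assignment. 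So either you must exhibit a pair of vertices of $G_F$ that forbids mixed splits in a basis of size $k(n+s)$ --- none of the pairs in your rigidity step does --- or the reading-off rule ``$x_i$ true iff some $t$-vertex is chosen'' cannot yield the claimed contradiction. Be aware that the paper's own proof of this direction simply asserts that the basis may be taken in the canonical all-$t$/all-$f$ form without justification, i.e., it glosses over the same point; but in your write-up this step is load-bearing and must be either proved or circumvented (e.g., by modifying the gadget so that only an all-or-nothing choice per cycle is feasible within the budget).
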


\begin{proof}
Let $\mathcal{F}$ be an arbitrary input to $3$-SAT problem having more than three variables and let $G_F$ be the graph associated to $\mathcal{F}$. We shall show that $\mathcal{F}$ is satisfiable if and only if $\dim_k(G_F)=k(n+s)$.

We first assume that $\mathcal{F}$ is satisfiable. 
From Proposition \ref{eq-1} we have that $\dim_k(G_F)\ge k(n+s)$. Now, based on a satisfying assignment of $\mathcal{F}$, we shall give a set $S$ of vertices of $G_F$, of cardinality $\vert S\vert=k(n+s)$,  which is $k$-metric generator.

Suppose we have a satisfying assignment for $\mathcal{F}$. For every clause $Q_j\in \mathcal{Q}$ we add to $S$ all the vertices of the set $V(P(u_j^2,u_j^3))-\{u_j\}$. For a variable $x_i\in X$ we consider the following. If the value of $x_i$ is true, then we add to $S$ the vertices $t_i^1,t_i^2,...,t_i^{2\left\lceil\frac{k}{2}\right\rceil}$. On the contrary, if the value of $x_i$ is false, then we add to $S$ the vertices $f_i^1,f_i^2,...,f_i^{2\left\lceil\frac{k}{2}\right\rceil}$.

We shall show that $S$ is a $k$-metric generator for $G_F$. Let $a,b$ be two different vertices of $G_F$. We consider the following cases.\\


\noindent Case 1. $a,b\in V(C^i)$ for some $i\in \{1,...,n\}$. Hence, there exists at most one vertex $y\in S\cap V(C^i)$ such that $d(a,y)=d(b,y)$. If $d(a,w)\ne d(b,w)$ for every vertex $w\in S\cap V(C^i)$, then since $|S\cap V(C^i)|=k$, we have that $|\mathcal{D}_G(a,b)\cap S|=k$. On the other hand, if there exist one vertex $y\in S\cap V(C^i)$ such that $d(a,y)=d(b,y)$, then $d(a,T_i)\ne d(b,T_i)$ and $d(a,F_i)\ne d(b,F_i)$. Thus, for every $w\in S-V(C^i)$ it follows that $d(a,w)\ne d(b,w)$ and $a,b$ are distinguished by more than $k$ vertices of $S$.\\

\noindent Case 2. $a,b\in V(P(u_j^2,u_j^3))$. Hence, there exists at most one vertex $y'\in S\cap V(P(u_j^2,u_j^3))$ such that $d(a,y')=d(b,y')$. But, in this case, $d(a,u_j)\ne d(b,u_j)$ and so  for every $w\in S-V(P(u_j^2,u_j^3))$ it follows that $d(a,w)\ne d(b,w)$ and $a,b$ are distinguished by at least $k$ vertices of $S$.\\

\noindent Case 3. $a=u_j^1$ and $b=u_j^4$. Since the clause $Q_j$ is satisfied, there exists $i\in \{1,...,n\}$, \emph{i.e}, a variable $x_i$ occurring in the clause $Q_j$ such that either
\begin{itemize}
\item $a\sim T_i$, $b\not\sim T_i$ and $S\cap V(C^i)=\left\{t_i^1,t_i^2,...,t_i^{2\left\lceil\frac{k}{2}\right\rceil}\right\}$, \emph{i.e}, a variable $x_i$ occurring as a positive literal in $Q_j$ and has the value true in the assignment, or
\item $a\sim F_i$, $b\not\sim F_i$ and $S\cap V(C^i)=\left\{f_i^1,f_i^2,...,f_i^{2\left\lceil\frac{k}{2}\right\rceil}\right\}$, \emph{i.e}, a variable $x_i$ occurring as a negative literal in $Q_j$ and has the value false in the assignment.
\end{itemize}
Thus, in any case we have that for every $w\in S\cap V(C^i)$ it follows $d(a,w)<d(b,w)$ and $a,b$ are distinguished by at least $k$ vertices of $S$.\\

\noindent Case 4. $a\in V(C^i)$ and $b\in V(C^l)$ for some $i,l\in \{1,...,n\}$, $i\ne l$. In this case, if there is a vertex $z\in S\cap V(C^i)$ such that  $d(a,z)=d(b,z)$, then for every vertex $w\in S\cap V(C^l)$ it follows that $d(a,w)\ne d(b,w)$. So $a,b$ are resolved by at least $k$ vertices of $S$.\\

\noindent Case 5. $a\in V(C^i)$ and $b\in V(P(u_j^2,u_j^3))$. It is similar to the case above.\\

\noindent Case 6. $a\in \{u_j^1,u_j^4\}$ and $b\notin \{u_j^1,u_j^4\}$.
 If $b\in V(C^i)$, for some $i\in \{1,\ldots,n\}$, then all elements of $S\cap V(P(u_j^2,u_j^3)$ distinguish $a,b$. Now, let $w$ be one of the two vertices adjacent to $u_j$ in $P(u_j^2,u_j^3)$. If $b\in V(P(u_j^2,u_j^3))-\{w\}$, then all elements of $S\cap V(C^i)$ distinguish $a,b$. On the other hand, since $n\ge 4$, if $b=w$, then  there exists a variable $x_l$ not occurring in the clause $Q_j$. Thus, the vertex $a$ is adjacent to $T_l$ and to $F_l$ and, as a consequence, the vertices of $S\cap V(C^l)$ distinguish $a,b$.

As a consequence of the cases above, we have that $S$ is a $k$-metric generator for $G_F$. Therefore, $\dim_k(G_F)=k(n+s)$.


Next we prove that, if $\dim_k(G_F)=k(n+s)$, then $\mathcal{F}$ is satisfiable. To this end, we show that there exists a $k$-metric basis $S$ of $G_F$ such that we can set an assignment of the variables, so that $\mathcal{F}$ is satisfiable. We take $S$ in the same way as the $k$-metric generator for $G_F$ described above. Since $S$ is a $k$-metric generator for $G_F$ of cardinality $k(n+s)$, it is also a $k$-metric basis. Note that for any cycle $C_i$ either $S\cap V(C_i)=\left\{t_i^1,t_i^2,...,t_i^{2\left\lceil\frac{k}{2}\right\rceil}\right\}$ or $S\cap V(C_i)=\left\{f_i^1,f_i^2,...,f_i^{2\left\lceil\frac{k}{2}\right\rceil}\right\}$.

In this sense, we set an assignment of the variables as follows. Given a variable $x_i\in X$, if $S\cap \left\{t_i^1,t_i^2,...,t_i^{2\left\lceil\frac{k}{2}\right\rceil}\right\}=\emptyset$, then we set $x_i$ to be false. Otherwise we set $x_i$ to be true.
We claim that this assignment satisfies $\mathcal{F}$.

Consider any clause $Q_j\in \mathcal{Q}$ and let $x_{j_1},x_{j_2},x_{j_3}$ the variables occurring in $Q_j$. Recall that for each clause $Q_h$, we have that $S\cap V(P(u_h^2,u_h^3))=V(P(u_h^2,u_h^3))-\{u_h\}$. Besides any vertex of $V(C_l)$ associated to a variable $x_l$, $l\ne j_1,j_2,j_3$, nor any vertex of $S\cap V(P(u_h^2,u_h^3))$ associated to a clause $Q_h$, distinguishes the vertices $u_j^1$ and $u_j^4$. Thus $u_j^1$ and $u_j^4$ must be distinguished by at least $k$ vertices belonging to $V(C_{j_1})\cup V(C_{j_2})\cup V(C_{j_3})$ associated to the variables $x_{j_1},x_{j_2},x_{j_3}$.

Now, according to the way in which we have added the edges between the vertices $T_{j_1}$, $T_{j_2}$, $T_{j_3}$, $F_{j_1}$, $F_{j_2}$, $F_{j_3}$ and $u_j^1,u_j^4$, we have that $u_j^1$ and $u_j^4$ are distinguished by at least $k$ vertices of $S$ if and only if one of the following statements holds.
\begin{itemize}
\item There exists $l\in \{1,2,3\}$ for which the variable $x_{j_l}$ occurs as a negative literal in the clause $Q_j$ and $S\cap \left\{t_j^1,t_j^2,...,t_j^{2\left\lceil\frac{k}{2}\right\rceil}\right\}=\emptyset$ (in such a case $x_{j_l}$ is set to be false).
\item There exists $l\in \{1,2,3\}$ for which the variable $x_{j_l}$ occurs as a positive literal in the clause $Q_j$ and $S\cap \left\{t_j^1,t_j^2,...,t_j^{2\left\lceil\frac{k}{2}\right\rceil}\right\}\ne \emptyset$ (in such a case $x_{j_l}$ is set to be true).
\end{itemize}
As a consequence of two cases above, we have that if at least $k$ vertices of $S$ distinguish $u_j^1,u_j^4$, then the setting of $x_{j_l}$, $l\in \{1,2,3\}$, is such that it satisfies the clause $Q_j$. Therefore $\mathcal{F}$ is satisfiable.
\end{proof}

As a consequence of the theorem above we have the following result.

\begin{corollary}
The problem of finding the $k$-metric dimension of graphs is $NP$-hard.
\end{corollary}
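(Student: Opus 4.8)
The plan is to derive the corollary directly from the preceding theorem by a standard Turing reduction from the decision problem to the function (computation) problem. Since the $k$-METRIC DIMENSION PROBLEM is $NP$-complete, it suffices to show that a hypothetical polynomial-time procedure computing $\dim_k(G)$ would yield a polynomial-time algorithm for that decision problem, hence would collapse $P$ and $NP$.

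First I would recall the exact shape of the decision problem: an instance is a $k'$-metric dimensional graph $G$ of order $n\ge 3$ together with integers $k,r$ satisfying $1\le k\le k'$ and $k+1\le r\le n$, and the question is whether $\dim_k(G)\le r$. Given such an instance one can check in polynomial time that it is well-formed: by Remark \ref{remark_K-Dimensional} the integer $k'$ for which $G$ is $k'$-metric dimensional is computable in $O(n^3)$ time, so the constraint $k\le k'$ is testable, and the bounds on $r$ are trivially testable.

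Then, assuming an algorithm $\mathcal{A}$ that on input $(G,k)$ returns $\dim_k(G)$ in time polynomial in the size of $G$, the reduction is immediate: run $\mathcal{A}(G,k)$ and answer ``yes'' precisely when $\mathcal{A}(G,k)\le r$. This is correct by definition of $\dim_k$ and runs in polynomial time, so it would place the $NP$-complete $k$-METRIC DIMENSION PROBLEM in $P$. Consequently no such $\mathcal{A}$ exists unless $P=NP$; equivalently, computing $\dim_k(G)$ is $NP$-hard, which is the claim.

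The only point requiring a little care — and it is minor — is that the reduction must respect the domain restrictions hard-coded into the decision problem (namely $k\le k'$ and $k+1\le r\le n$); but each of these is polynomial-time checkable, so the argument goes through without obstruction. All of the genuine difficulty has already been absorbed into the proof of the theorem, and the corollary itself presents no real obstacle.
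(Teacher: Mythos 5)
Your argument is correct and is exactly the reasoning the paper leaves implicit: the corollary follows from the theorem because a polynomial-time algorithm computing $\dim_k(G)$ would decide the $NP$-complete $k$-METRIC DIMENSION PROBLEM by a single comparison with $r$. The paper states this as an immediate consequence with no written proof, so your fleshed-out reduction is the same approach, just made explicit.
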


\section{The particular case of trees}\label{k-metric-dimension-tree}

In order to continue presenting our results, we need to introduce some definitions. A vertex of degree at least three in a tree $T$ is called a \emph{major vertex} of $T$. Any leaf $u$ of $T$ is said to be a \emph{terminal vertex} of a major vertex $v$ of $T$, if $d_{T}(u,v)<d_{T}(u,w)$ for every other major vertex $w$ of $T$. The \emph{terminal degree} $\ter(v)$ of a major vertex $v$ is the number of terminal vertices of $v$. A major vertex $v$ of $T$ is an \emph{exterior major vertex} of $T$ if it has positive terminal degree. Let $\mathcal{M}(T)$ be the set of exterior major vertices of $T$ having terminal degree greater than one.

Given $w\in \mathcal{M}(T)$ and a terminal vertex $u_{j}$ of $w$, we denote by  $P(u_j,w)$ the shortest path that starts at $u_{j}$ and ends at $w$. Let $l(u_{j},w)$ be the length of $P(u_j,w)$. Now, given $w\in \mathcal{M}(T)$ and two terminal vertices $u_{j},u_{r}$ of $w$ we denote by  $P(u_{j},w,u_{r})$  the shortest path from $u_{j}$ to $u_{r}$ containing  $w$,
 and by  $\varsigma(u_{j},u_{r})$  the length of $P(u_{j},w,u_{r})$. Notice that, by the definition of exterior major vertex, $P(u_{j},w,u_{r})$ is obtained by concatenating the paths $P(u_{j},w)$ and $P(u_{r},w)$, where $w$ is the only vertex of degree greater than two lying on these paths.

Finally, given $w\in \mathcal{M}(T)$ and the set of terminal vertices $U=\{u_{1},u_{2},\ldots,u_{k}\}$ of $w$, for $j\not=r$ we define $$\varsigma(w)=\displaystyle\min_{u_{j},u_{r}\in U}\{\varsigma(u_{j},u_{r})\}$$ and  $$l(w)=\displaystyle\min_{u_{j}\in U}\{l(u_{j},w)\}.$$ From the local parameters above we define the following global parameter $$\varsigma(T)=\min_{w\in \mathcal{M}(T)}\{\varsigma(w)\}.$$

An example of a tree $T$ which helps to better understand the notation above is given in Figure~\ref{figTreeNotation}. In such a case we have that $\mathcal{M}(T)=\{6,12,26\}$,  $\{1,4\}$ is the set of terminal vertices of $6$, $\{9,11\}$ is the set of terminal vertices of $12$ and $\{15,20,23\}$ is the set of terminal vertices of $26$. For instance, for the vertex $26$ we have that $l(26)=\min\{l(15,26),l(20,26),l(23,26)\}=\min\{5,3,3\}=3$ and $\varsigma(26)=\min\{\varsigma(15,20),\varsigma(15,23),\varsigma(20,23)\}=\min\{8,8,6\}=6$. Analogously, we deduce that $l(6)=2$, $\varsigma(6)=5$, $l(12)=1$ and $\varsigma(12)=3$. Therefore, we conclude that $\varsigma(T)=\min\{\varsigma(6),\varsigma(12),\varsigma(26)\}=\min\{5,3,6\}=3$.

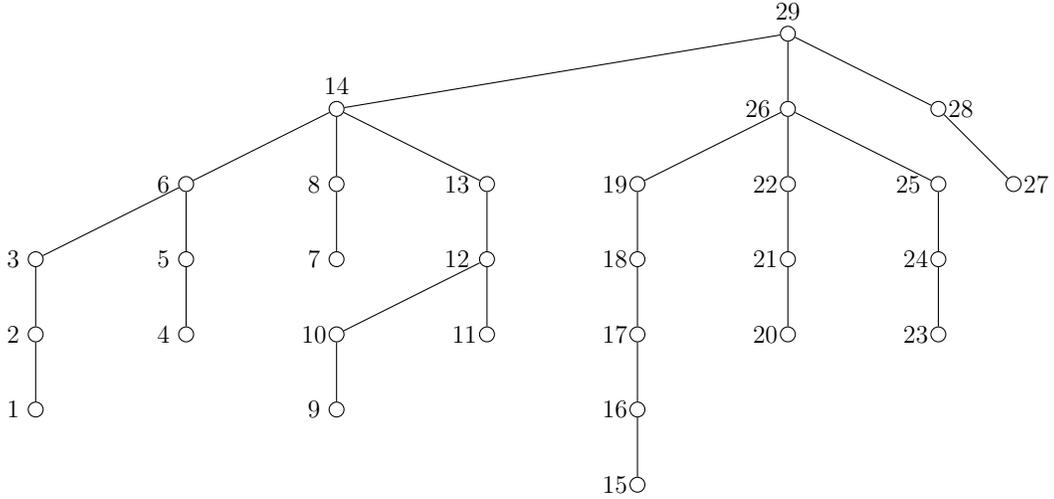
\begin{figure}[!ht]
\centering
\begin{tikzpicture}[transform shape, inner sep = .7mm]
\node [draw=black, shape=circle, fill=white] (v0) at (0,0) {};
\node [draw=black, shape=circle, fill=white] (v01) at (-6,-1) {};
\node [draw=black, shape=circle, fill=white] (v02) at (0,-1) {};
\node [draw=black, shape=circle, fill=white] (v03) at (2,-1) {};
\node [draw=black, shape=circle, fill=white] (v11) at (-8,-2) {};
\node [draw=black, shape=circle, fill=white] (v12) at (-4,-2) {};
\node [draw=black, shape=circle, fill=white] (v13) at (-6,-2) {};
\node [draw=black, shape=circle, fill=white] (v111) at (-10,-3) {};
\node [draw=black, shape=circle, fill=white] (v112) at (-8,-3) {};
\node [draw=black, shape=circle, fill=white] (v1111) at (-10,-4) {};
\node [draw=black, shape=circle, fill=white] (v11111) at (-10,-5) {};
\node [draw=black, shape=circle, fill=white] (v1121) at (-8,-4) {};
\node [draw=black, shape=circle, fill=white] (v121) at (-4,-3) {};
\node [draw=black, shape=circle, fill=white] (v1211) at (-6,-4) {};
\node [draw=black, shape=circle, fill=white] (v1212) at (-4,-4) {};
\node [draw=black, shape=circle, fill=white] (v12111) at (-6,-5) {};
\node [draw=black, shape=circle, fill=white] (v131) at (-6,-3) {};
\node [draw=black, shape=circle, fill=white] (v21) at (-2,-2) {};
\node [draw=black, shape=circle, fill=white] (v22) at (0,-2) {};
\node [draw=black, shape=circle, fill=white] (v23) at (2,-2) {};
\node [draw=black, shape=circle, fill=white] (v211) at (-2,-3) {};
\node [draw=black, shape=circle, fill=white] (v2111) at (-2,-4) {};
\node [draw=black, shape=circle, fill=white] (v21111) at (-2,-5) {};
\node [draw=black, shape=circle, fill=white] (v211111) at (-2,-6) {};
\node [draw=black, shape=circle, fill=white] (v221) at (0,-3) {};
\node [draw=black, shape=circle, fill=white] (v2211) at (0,-4) {};
\node [draw=black, shape=circle, fill=white] (v231) at (2,-3) {};
\node [draw=black, shape=circle, fill=white] (v2311) at (2,-4) {};
\node [draw=black, shape=circle, fill=white] (v31) at (3,-2) {};

\draw (v0) -- (v01) -- (v11) -- (v111) -- (v1111) -- (v11111);
\draw (v11) -- (v112) -- (v1121);
\draw (v01) -- (v12) -- (v121) -- (v1211) -- (v12111);
\draw (v01) -- (v13) -- (v131);
\draw (v121) -- (v1212);
\draw (v0) -- (v02) -- (v21) -- (v211) -- (v2111) -- (v21111) -- (v211111);
\draw (v02) -- (v22) -- (v221) -- (v2211);
\draw (v02) -- (v23) -- (v231) -- (v2311);
\draw (v0) -- (v03) -- (v31);

\node [scale=.8] at ([xshift=-.3 cm]v11111) {$1$};
\node [scale=.8] at ([xshift=-.3 cm]v1111) {$2$};
\node [scale=.8] at ([xshift=-.3 cm]v111) {$3$};
\node [scale=.8] at ([xshift=-.3 cm]v1121) {$4$};
\node [scale=.8] at ([xshift=-.3 cm]v112) {$5$};
\node [scale=.8] at ([xshift=-.3 cm]v11) {$6$};
\node [scale=.8] at ([xshift=-.3 cm]v131) {$7$};
\node [scale=.8] at ([xshift=-.3 cm]v13) {$8$};
\node [scale=.8] at ([xshift=-.3 cm]v12111) {$9$};
\node [scale=.8] at ([xshift=-.3 cm]v1211) {$10$};
\node [scale=.8] at ([xshift=-.3 cm]v1212) {$11$};
\node [scale=.8] at ([xshift=-.4 cm]v121) {$12$};
\node [scale=.8] at ([xshift=-.4 cm]v12) {$13$};
\node [scale=.8] at ([yshift=.3 cm]v01) {$14$};
\node [scale=.8] at ([xshift=-.3 cm]v211111) {$15$};
\node [scale=.8] at ([xshift=-.3 cm]v21111) {$16$};
\node [scale=.8] at ([xshift=-.3 cm]v2111) {$17$};
\node [scale=.8] at ([xshift=-.3 cm]v211) {$18$};
\node [scale=.8] at ([xshift=-.3 cm]v21) {$19$};
\node [scale=.8] at ([xshift=-.3 cm]v2211) {$20$};
\node [scale=.8] at ([xshift=-.3 cm]v221) {$21$};
\node [scale=.8] at ([xshift=-.3 cm]v22) {$22$};
\node [scale=.8] at ([xshift=-.3 cm]v2311) {$23$};
\node [scale=.8] at ([xshift=-.3 cm]v231) {$24$};
\node [scale=.8] at ([xshift=-.4 cm]v23) {$25$};
\node [scale=.8] at ([xshift=-.4 cm]v02) {$26$};
\node [scale=.8] at ([xshift=.3 cm]v31) {$27$};
\node [scale=.8] at ([xshift=.3 cm]v03) {$28$};
\node [scale=.8] at ([yshift=.3 cm]v0) {$29$};
\end{tikzpicture}
\caption{A tree $T$ where $\varsigma(T)=3$. Note that vertices are labeled through a post-order traversal.}
\label{figTreeNotation}
\end{figure}


\subsection{On $k$-metric dimensional trees different from paths}

In this section we focus on finding the positive integer $k$ for which a tree is $k$-metric dimensional. We now show a result presented in \cite{Estrada-Moreno2013} that allow us to consider only those trees that are not paths.

\begin{theorem}{\em \cite{Estrada-Moreno2013}}
$\ $A graph $G$ of order $n\ge 3$ is $(n-1)$-metric dimensional if and only if $G$ is a path or $G$ is an odd cycle.
\end{theorem}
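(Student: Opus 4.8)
The plan is to reduce the statement, via Theorem~\ref{theokmetric}, to a condition about ``equidistant'' vertices. For distinct $x,y\in V(G)$ write $\overline{\mathcal{D}}_G(x,y)=V(G)\setminus\mathcal{D}_G(x,y)=\{z\in V(G):d_G(x,z)=d_G(y,z)\}$, the set of vertices \emph{equidistant} from $x$ and $y$. By Theorem~\ref{theokmetric}, $G$ is $(n-1)$-metric dimensional if and only if $\min_{x\neq y}|\mathcal{D}_G(x,y)|=n-1$, i.e. if and only if $|\overline{\mathcal{D}}_G(x,y)|\le 1$ for every pair of distinct vertices. Indeed the inequality $|\overline{\mathcal{D}}_G(x,y)|\le 1$ is all that must be verified: it gives $\min|\mathcal{D}_G|\ge n-1$, and the value cannot be $n$ because, as observed just before Remark~\ref{remarkKMetric}, for $n\ge 3$ some vertex of degree at least two does not distinguish two of its neighbours, so some pair already has an equidistant vertex. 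Hence it suffices to prove: \emph{every two distinct vertices of $G$ have at most one equidistant vertex} $\iff$ $G$ is a path or an odd cycle.

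For the ``if'' direction I would check the two families by a direct distance computation. In $P_n$ with consecutive vertices $v_1,\dots,v_n$ one has $d_G(v_i,v_k)=|i-k|$, so $v_k\in\overline{\mathcal{D}}_G(v_i,v_j)$ exactly when $k=(i+j)/2$, which pins down at most one vertex. In $C_n$ with $n=2m+1$, take two vertices at cyclic distance $d$, $1\le d\le m$; the vertices equidistant from them are precisely the midpoints of the two arcs joining them, and since $n$ is odd exactly one of those arcs has even length and thus contains a genuine midpoint vertex while the other does not, so there is exactly one equidistant vertex. Therefore paths and odd cycles satisfy the condition (and, by the remark above, are $(n-1)$-metric dimensional).

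For the ``only if'' direction, assume every two distinct vertices of $G$ have at most one equidistant vertex. The key step is to show $\Delta(G)\le 2$. Suppose a vertex $v$ had three distinct neighbours $a_1,a_2,a_3$. Then $v$ is equidistant from each pair $a_i,a_j$, so by hypothesis $\overline{\mathcal{D}}_G(a_i,a_j)=\{v\}$; in particular the remaining neighbour satisfies $d_G(a_\ell,a_i)\ne d_G(a_\ell,a_j)$. Running this over the three choices of $\ell$ forces the three numbers $d_G(a_1,a_2)$, $d_G(a_1,a_3)$, $d_G(a_2,a_3)$ to be pairwise distinct; but each of them lies in $\{1,2\}$ since $a_i$ and $a_j$ share the common neighbour $v$, which is impossible. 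Hence $G$, being connected with $\Delta(G)\le 2$, is a path or a cycle. If it is not a path it is a cycle, and if it is moreover not an odd cycle it is an even cycle $C_{2m}$ with $m\ge 2$; there, taking $x,y$ at cyclic distance $2$, both the vertex lying between them and the antipode of that vertex are equidistant from $x$ and $y$, so $|\overline{\mathcal{D}}_G(x,y)|\ge 2$, contradicting the hypothesis. This completes the characterization.

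Every step is elementary; the part needing the most care is the cycle bookkeeping — counting equidistant vertices in $C_n$ correctly as a function of the parity of $n$ and of the distance between the chosen pair — together with selecting the right triple of pairs so that the pigeonhole argument for $\Delta(G)\le 2$ goes through cleanly.
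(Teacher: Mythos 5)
Your argument is correct. Note, however, that the paper does not prove this statement at all: it is quoted verbatim from \cite{Estrada-Moreno2013}, so there is no in-paper proof to compare against. Your reduction via Theorem \ref{theokmetric} to the condition ``every pair of distinct vertices has at most one equidistant vertex'' is exactly the right lens, and each step checks out: the boundary case $\min|\mathcal{D}_G(x,y)|=n$ is correctly excluded by the degree observation preceding Remark \ref{remarkKMetric}; the count of equidistant vertices in $P_n$ and in $C_{2m+1}$ (one arc of even length, hence exactly one midpoint) is right; the pigeonhole argument forcing $\Delta(G)\le 2$ is clean, since the three pairwise distances among neighbours of a common vertex would have to be pairwise distinct yet all lie in $\{1,2\}$; and the even cycle is correctly eliminated by exhibiting two equidistant vertices for a pair at cyclic distance $2$. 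This is a complete and self-contained proof of the cited result.
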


The following theorem presented in \cite{Estrada-Moreno2013} is the base of the algorithm presented in this subsection.

\begin{theorem}{\em \cite{Estrada-Moreno2013}}\label{theoTreeDimK}
If $T$ is a $k$-metric dimensional tree different from a path, then $k=\varsigma(T)$.
\end{theorem}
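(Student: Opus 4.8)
The plan is to use Theorem~\ref{theokmetric}: since $T$ is $k$-metric dimensional exactly when $k=\min_{x\neq y}|\mathcal{D}_T(x,y)|$, it suffices to prove $\min_{x\neq y}|\mathcal{D}_T(x,y)|=\varsigma(T)$. The first thing I would establish is the tree structure of $\mathcal{D}_T(x,y)$. In a tree a vertex $z$ satisfies $d_T(x,z)=d_T(y,z)$ if and only if the point where the (unique) $x$--$y$ path meets the path from $z$ is the \emph{midpoint} of the $x$--$y$ path; in particular this requires $d_T(x,y)$ to be even. Hence $|\mathcal{D}_T(x,y)|=n$ when $d_T(x,y)$ is odd, whereas if $d_T(x,y)$ is even with midpoint $c$, then deleting the two edges of the $x$--$y$ path incident with $c$ splits $T$ into the component of $x$, the component of $y$, and the component of $c$; the non-distinctive vertices are exactly the last component, so $|\mathcal{D}_T(x,y)|$ equals the sum of the sizes of the two components of $T-c$ that meet the $x$--$y$ path.

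Next I would reduce to a purely local quantity. For a vertex $c$ with $\deg_T(c)\geq 2$, write $\beta(c)$ for the sum of the sizes of the two smallest components of $T-c$. By the previous paragraph, any even pair $x,y$ with midpoint $c$ has $|\mathcal{D}_T(x,y)|\geq\beta(c)$ (its value is a sum of sizes of two distinct components of $T-c$), and conversely choosing $x,y$ to be the neighbours of $c$ in the two smallest components of $T-c$ yields an even pair with $|\mathcal{D}_T(x,y)|=\beta(c)$. Odd pairs contribute only $n$, which is larger than $\beta(c')$ for any admissible $c'$ (all components of $T-c'$ together have $n-1$ vertices), and even pairs exist because $T$ is not a path, so $n\geq 4$. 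Therefore $\min_{x\neq y}|\mathcal{D}_T(x,y)|=\min_{c:\,\deg_T(c)\geq 2}\beta(c)$, and the theorem becomes the identity $\min_{c:\,\deg_T(c)\geq 2}\beta(c)=\varsigma(T)$.

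For the identity, the inequality $\min\beta\leq\varsigma(T)$ is easy: picking $w\in\mathcal{M}(T)$ and terminal vertices $u_j,u_r$ of $w$ with $\varsigma(u_j,u_r)=\varsigma(w)=\varsigma(T)$, the paths $P(u_j,w)$ and $P(u_r,w)$ have all internal vertices of degree two, so the components of $T-w$ containing $u_j$ and $u_r$ are precisely those paths with $w$ removed, of sizes $l(u_j,w)$ and $l(u_r,w)$; hence $\beta(w)\leq l(u_j,w)+l(u_r,w)=\varsigma(T)$. For the reverse inequality, fix $c$ with $\deg_T(c)\geq 2$ and show $\beta(c)\geq\varsigma(T)$. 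If $\deg_T(c)=2$ then $\beta(c)=n-1\geq\operatorname{diam}(T)\geq\varsigma(T)$. If $\deg_T(c)\geq 3$, root $T$ at $c$ and let $A_1,A_2$ be its two smallest branches. If, say, $A_1$ contains a major vertex, choose a major vertex $w\in A_1$ with no major vertex strictly below it; then each of its (at least two) downward branches is a bare path ending at a leaf terminal to $w$, so $\ter(w)\geq 2$, i.e.\ $w\in\mathcal{M}(T)$, and $\varsigma(w)$ is at most the sum of the two smallest of these downward branch sizes, which is at most $|A_1|-1<\beta(c)$. If neither $A_1$ nor $A_2$ contains a major vertex, each is a bare path ending at a leaf $\ell_i$ with $l(\ell_i,c)=|A_i|$ that is terminal to $c$, so $c\in\mathcal{M}(T)$ and $\varsigma(c)\leq l(\ell_1,c)+l(\ell_2,c)=\beta(c)$. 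In all cases $\varsigma(T)\leq\beta(c)$, which finishes the identity and the theorem.

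The routine parts are Steps one and two (tree bookkeeping about projections onto a path and about components of $T-c$). The main obstacle is the lower bound $\beta(c)\geq\varsigma(T)$: one must check carefully that ``descending to a lowest major vertex'' inside a small branch of $c$ really produces a vertex of $\mathcal{M}(T)$ — i.e.\ one with terminal degree at least two — whose two shortest terminal legs lie entirely inside that branch, so that the estimate $\varsigma(w)\leq|A_1|-1$ is legitimate; and the borderline case, in which the two smallest branches of $c$ are themselves bare legs at $c$, is exactly what produces $\varsigma(c)$ and hence pins the minimum down to $\varsigma(T)$.
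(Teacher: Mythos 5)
The paper does not actually prove this theorem: it is imported verbatim from \cite{Estrada-Moreno2013}, so there is no in-paper argument to compare against. Judged on its own, your proof is correct and complete. The route — via Theorem \ref{theokmetric}, reducing $\min_{x\ne y}|\mathcal{D}_T(x,y)|$ first to the local quantity $\beta(c)$ (sum of the two smallest components of $T-c$ over vertices $c$ of degree at least two, using the midpoint characterization of non-distinguishing vertices and the fact that odd pairs contribute $n>n-1\ge\beta(c)$) and then proving $\min_c\beta(c)=\varsigma(T)$ — is a clean and self-contained derivation. The two places a careful reader should pause both check out: (i) in the case where a smallest branch $A_1$ of $c$ contains a major vertex, a deepest such vertex $w$ exists, all of its child subtrees are bare paths, so $\ter(w)\ge 2$, $w\in\mathcal{M}(T)$, and its two shortest terminal legs lie inside $A_1$, giving $\varsigma(w)\le |A_1|-1<\beta(c)$; (ii) when neither smallest branch contains a major vertex, $c$ itself lands in $\mathcal{M}(T)$ with $\varsigma(c)\le\beta(c)$, which is exactly the configuration realizing equality. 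The only unstated ingredient is that $\mathcal{M}(T)\ne\emptyset$ for a tree that is not a path (so that $\varsigma(T)$ is defined); this is standard and is already presupposed by the statement itself.
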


Now we consider the problem of finding the integer $k$ such that a tree $T$ of order $n$ is $k$-metric dimensional.
$$\begin{tabular}{|l|}
  \hline
  \mbox{$k$-DIMENSIONAL TREE PROBLEM}\\
  \mbox{INSTANCE: A tree $T$ different from a path of order $n$}\\
  \mbox{PROBLEM: Find the integer $k$, $2\le k\le n-1$, such that $T$ is $k$-metric dimensional}\\
  \hline
\end{tabular}$$

\noindent\textbf{Algorithm 1: }\\
{\bf Input:} A tree $T$ different from a path rooted in a major vertex $v$.\\
{\bf Output:} The value $k$ for which $T$ is $k$-metric dimensional.
\begin{enumerate}
\item For any vertex $u\in V(T)$ visited by post-order traversal as shown in Figure \ref{figTreeNotation}, assign a pair $(a_u,b_u)$ in the following way:
\begin{enumerate}
\item If $u$ does not have any child ($u$ is a leaf), then $a_u=1$ and $b_u=\infty$.
\item If $u$ has only one child ($u$ has degree $2$), then $a_u=a_{u'}+1$ and $b=b_{u'}$, where the pair $(a_{u'},b_{u'})$ was assigned to the child vertex of $u$. Note that $a_{u'}$ can be $\infty$. Thus, in such case, $a_u=\infty$.
\item If $u$ has at least two children ($u$ is a major vertex), then $a_u=\infty$ and $b_u=\min\{a_{u_1}+a_{u_2}, b_{min}\}$, where $a_{u_1}$ and $a_{u_2}$ are the two minimum values among all possible pairs $(a_{u_i},b_{u_i})$ assigned to the children of $u$, and $b_{min}$ is the minimum value among all the $b_{u_i}$'s.
\end{enumerate}
\item The value $k$ for which $T$ is $k$-metric dimensional equals $b_v$ (the second element of the pair assigned to the root $v$).
\end{enumerate}

Figure \ref{figTreeDimensional} shows an example of a run of Algorithm 1 for the tree shown in Figure \ref{figTreeNotation}.

\begin{figure}[!ht]
\centering
\begin{tikzpicture}[transform shape, inner sep = .7mm]
\node [draw=black, shape=circle, fill=white] (v0) at (0,0) {};
\node [draw=black, shape=circle, fill=white] (v01) at (-6,-1) {};
\node [draw=black, shape=circle, fill=white] (v02) at (0,-1) {};
\node [draw=black, shape=circle, fill=white] (v03) at (2,-1) {};
\node [draw=black, shape=circle, fill=white] (v11) at (-8,-2) {};
\node [draw=black, shape=circle, fill=white] (v12) at (-4,-2) {};
\node [draw=black, shape=circle, fill=white] (v13) at (-6,-2) {};
\node [draw=black, shape=circle, fill=white] (v111) at (-10,-3) {};
\node [draw=black, shape=circle, fill=white] (v112) at (-8,-3) {};
\node [draw=black, shape=circle, fill=white] (v1111) at (-10,-4) {};
\node [draw=black, shape=circle, fill=white] (v11111) at (-10,-5) {};
\node [draw=black, shape=circle, fill=white] (v1121) at (-8,-4) {};
\node [draw=black, shape=circle, fill=white] (v121) at (-4,-3) {};
\node [draw=black, shape=circle, fill=white] (v1211) at (-6,-4) {};
\node [draw=black, shape=circle, fill=white] (v1212) at (-4,-4) {};
\node [draw=black, shape=circle, fill=white] (v12111) at (-6,-5) {};
\node [draw=black, shape=circle, fill=white] (v131) at (-6,-3) {};
\node [draw=black, shape=circle, fill=white] (v21) at (-2,-2) {};
\node [draw=black, shape=circle, fill=white] (v22) at (0,-2) {};
\node [draw=black, shape=circle, fill=white] (v23) at (2,-2) {};
\node [draw=black, shape=circle, fill=white] (v211) at (-2,-3) {};
\node [draw=black, shape=circle, fill=white] (v2111) at (-2,-4) {};
\node [draw=black, shape=circle, fill=white] (v21111) at (-2,-5) {};
\node [draw=black, shape=circle, fill=white] (v211111) at (-2,-6) {};
\node [draw=black, shape=circle, fill=white] (v221) at (0,-3) {};
\node [draw=black, shape=circle, fill=white] (v2211) at (0,-4) {};
\node [draw=black, shape=circle, fill=white] (v231) at (2,-3) {};
\node [draw=black, shape=circle, fill=white] (v2311) at (2,-4) {};
\node [draw=black, shape=circle, fill=white] (v31) at (3,-2) {};

\draw (v0) -- (v01) -- (v11) -- (v111) -- (v1111) -- (v11111);
\draw (v11) -- (v112) -- (v1121);
\draw (v01) -- (v12) -- (v121) -- (v1211) -- (v12111);
\draw (v01) -- (v13) -- (v131);
\draw (v121) -- (v1212);
\draw (v0) -- (v02) -- (v21) -- (v211) -- (v2111) -- (v21111) -- (v211111);
\draw (v02) -- (v22) -- (v221) -- (v2211);
\draw (v02) -- (v23) -- (v231) -- (v2311);
\draw (v0) -- (v03) -- (v31);

\node [scale=.8] at ([xshift=-.6 cm]v11111) {$(1,\infty)$};
\node [scale=.8] at ([xshift=-.6 cm]v1111) {$(2,\infty)$};
\node [scale=.8] at ([xshift=-.6 cm]v111) {$(3,\infty)$};
\node [scale=.8] at ([xshift=-.6 cm]v1121) {$(1,\infty)$};
\node [scale=.8] at ([xshift=-.6 cm]v112) {$(2,\infty)$};
\node [scale=.8] at ([xshift=-.6 cm]v11) {$(\infty,5)$};
\node [scale=.8] at ([xshift=-.6 cm]v131) {$(1,\infty)$};
\node [scale=.8] at ([xshift=-.6 cm]v13) {$(2,\infty)$};
\node [scale=.8] at ([xshift=-.6 cm]v12111) {$(1,\infty)$};
\node [scale=.8] at ([xshift=-.6 cm]v1211) {$(2,\infty)$};
\node [scale=.8] at ([xshift=-.6 cm]v1212) {$(1,\infty)$};
\node [scale=.8] at ([xshift=-.7 cm]v121) {$(\infty,3)$};
\node [scale=.8] at ([xshift=-.7 cm]v12) {$(\infty,3)$};
\node [scale=.8] at ([yshift=.3 cm]v01) {$(\infty,3)$};
\node [scale=.8] at ([xshift=-.6 cm]v211111) {$(1,\infty)$};
\node [scale=.8] at ([xshift=-.6 cm]v21111) {$(2,\infty)$};
\node [scale=.8] at ([xshift=-.6 cm]v2111) {$(3,\infty)$};
\node [scale=.8] at ([xshift=-.6 cm]v211) {$(4,\infty)$};
\node [scale=.8] at ([xshift=-.6 cm]v21) {$(5,\infty)$};
\node [scale=.8] at ([xshift=-.6 cm]v2211) {$(1,\infty)$};
\node [scale=.8] at ([xshift=-.6 cm]v221) {$(2,\infty)$};
\node [scale=.8] at ([xshift=-.6 cm]v22) {$(3,\infty)$};
\node [scale=.8] at ([xshift=-.6 cm]v2311) {$(1,\infty)$};
\node [scale=.8] at ([xshift=-.6 cm]v231) {$(2,\infty)$};
\node [scale=.8] at ([xshift=-.6 cm]v23) {$(3,\infty)$};
\node [scale=.8] at ([xshift=-.6 cm]v02) {$(\infty,6)$};
\node [scale=.8] at ([xshift=.6 cm]v31) {$(1,\infty)$};
\node [scale=.8] at ([xshift=.6 cm]v03) {$(2,\infty)$};
\node [scale=.8] at ([yshift=.3 cm]v0) {$(\infty,3)$};
\end{tikzpicture}
\caption{Algorithm 1 yields that this tree is $3$-metric dimensional.}
\label{figTreeDimensional}
\end{figure}

\begin{remark}
Let $T$ be a tree different from a path of order $n$. Algorithm 1 computes the integer $k$, $2\le k\le n-1$, such that $T$ is $k$-metric dimensional.
\end{remark}

\begin{proof}
Let $v$ be the major vertex taken as the root of the tree $T$ different from a path, and let  $(a_v,b_v)$ be the pair stored in $v$ by Algorithm 1. We show that $b_v=\varsigma(T)$. Since $v$ is a major vertex, it has at least three children. Let $t\ge 3$ be the number of children of $v$ and let $S_1,\ldots,S_t$ be the subtrees whose roots are the children $v_1,\ldots,v_t$ of $v$, respectively. We differentiate two cases:
\begin{enumerate}
\item There exist at least two subtrees that are paths. In this case $v\in\mathcal{M}(T)$. Let $S_1,\ldots,S_{t'}$ be the subtrees that are paths, where $2\le t'\le t$. In this case, after running Algorithm 1, each root $v_i$ of $S_i$, $1\le i\le t'$, stores the pair $(a_{v_i},\infty)$, where $a_{v_i}$ is the number of vertices of $S_i$. Note that $\varsigma(v)=a_{v_1}+a_{v_2}$, where $a_{v_1}$ and $a_{v_2}$ are the two minimum values among all $a_{v_i}$'s belonging to the pairs $(a_{v_i},b_{v_i})$ stored by the children of $v$ such that $1\le i\le t'$. If $t'=t$, then $v$ is the only exterior major vertex of $T$, and Algorithm 1 stores in $v$ the pair $(a_v,b_v)=(\infty,\varsigma(v))=(\infty,\varsigma(T))$. Assume now that $t'<t$. Thus, there exists at least one subtree that is not path. Let $S_{t'+1},\ldots,S_t$ be the subtrees that are not paths. For each root $v_i$ of $S_i$, $t'+1\le i\le t$,  if $v_i$ is a major vertex, then we take the vertex $v_i'=v_i$. Otherwise, $v_i'$ is the first descendant of $v_i$ that is a major vertex. In this case, Algorithm 1 recursively stores in $v_i'$ the pair $(\infty, b_{v_i'})$, where $\displaystyle b_{v_i'}=\min_{v'\in\mathcal{M}(T)\cap V(S_i)}\{\varsigma(v')\}$. In both cases, $b_{v_i}=b_{v_i'}$, where $(\infty, b_{v_i})$ is the pair stored in $v_i$ by Algorithm 1. Therefore, by Algorithm 1, the root $v$ stores the pair $(a_v,b_v)=(\infty, \min\{\varsigma(v), b_{min}\})=(\infty,\varsigma(T))$, where $\displaystyle b_{min}=\min_{t'+1\le i\le t}\{b_i\}$.
\item There exists at most one subtree that is a path. In this case $v\notin\mathcal{M}(T)$. Let $S_1,\ldots,S_t'$ be the subtrees that are not paths, where $1\le t'\le t$. For each root $v_i$ of $S_i$, $1\le i\le t'$, if $v_i$ is a major vertex, then we take the vertex $v_i'=v_i$. Otherwise, $v_i'$ is the first descendant of $v_i$ that is a major vertex. In this case, Algorithm 1 recursively stores in $v_i'$ the pair $(\infty, b_{v_i'})$, where $\displaystyle b_{v_i'}=\min_{v'\in\mathcal{M}(T)\cap V(S_i)}\{\varsigma(v')\}$. In both cases, $b_{v_i}=b_{v_i'}$, where $(\infty, b_{v_i})$ is the pair stored in $v_i$ by Algorithm 1. Note in this case, at least one of two minimum values among all $a_{v_i}$ of pairs $(a_{v_i},b_{v_i})$ stored by the children of $v$ is infinity. Therefore, by Algorithm 1, $v$ stores the pair $(a_v,b_v)=(\infty, b_{min})=(\infty,\varsigma(T))$, where $\displaystyle b_{min}=\min_{1\le i\le t'}\{b_{v_i}\}$.
\end{enumerate}
In any case, $b_v=\varsigma(T)$, and the result follows.
\end{proof}

\begin{corollary}
The positive integer $k$ for which a tree different from a path is $k$-metric dimensional can be computed in linear time with respect to the order of the tree.
\end{corollary}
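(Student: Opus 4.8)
The plan is to invoke the preceding remark, which already establishes that Algorithm~1 outputs the value $k=\varsigma(T)$ for which $T$ is $k$-metric dimensional, so that only the running time remains to be bounded. First I would handle the preprocessing: choosing the root and computing the post-order ordering. Since $T$ is not a path, it has a vertex of degree at least three, and a single depth-first search from an arbitrary vertex simultaneously locates such a major vertex (to be used as the root) and produces the post-order traversal of the resulting rooted tree; this costs $O(n)$ time and space.

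Next I would analyze Step~1. The algorithm visits each vertex $u$ exactly once, in post-order, and the pair $(a_u,b_u)$ depends only on the pairs already stored at the children of $u$. In cases~(a) and~(b) (a leaf, or a vertex of degree two) the update is a constant number of additions and comparisons. In case~(c) the vertex $u$ is major and the work amounts to a single linear scan of the list of children of $u$ in order to extract the two smallest $a$-values and the smallest $b$-value; this takes time proportional to the number $c(u)$ of children of $u$.

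The crucial step is the aggregate (handshaking) bound. In a rooted tree every vertex except the root is the child of exactly one vertex, so $\sum_{u\in V(T)} c(u)=n-1$. Consequently the total time spent across all executions of Step~1 is $O\left(\sum_{u\in V(T)}\bigl(1+c(u)\bigr)\right)=O(n)$. Moreover every quantity manipulated ($a_u$, $b_u$, and the symbol $\infty$) is either at most $n$ or the sentinel $\infty$, so each arithmetic operation and each comparison is $O(1)$; Step~2 is a single lookup at the root. Combining the $O(n)$ preprocessing with the $O(n)$ main loop yields the claimed linear-time bound, and a linear amount of storage suffices since only one pair is kept per vertex.

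The only point I expect to require genuine care is the handling of case~(c): one must avoid the temptation to sort each child list — which would inflate the bound to $O(n\log n)$ — and instead note that the required two minimum $a$-values and the minimum $b$-value are obtainable by one pass over the children, after which the handshaking identity makes the overall cost linear.
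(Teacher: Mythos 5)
Your argument is correct and matches the paper's (implicit) justification: the corollary follows because Algorithm~1 is a single post-order traversal in which each vertex's pair is computed from its children's pairs, and the handshake identity $\sum_{u}c(u)=n-1$ bounds the total work at major vertices by $O(n)$. The paper states the corollary without proof, so your explicit accounting of the preprocessing, the per-vertex cost, and the avoidance of sorting at major vertices is exactly the detail the paper leaves to the reader.
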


\subsection{On the $k$-metric bases  and the $k$-metric dimension of  trees different from  paths}

Based on the fact that any tree different from a path is $\varsigma(T)$-metric dimensional, in this section we propose an algorithm  to compute the $k$-metric dimension and other one to determine a $k$-metric basis of any $k\le \varsigma(T)$. We first present a result with the value of the $k$-metric dimension of paths, already presented in \cite{Estrada-Moreno2013}. Further on, we center our attention to those trees different from paths.

\begin{proposition}\label{propPath}{\em \cite{Estrada-Moreno2013}}
Let $k\geq 3$ be an integer. For any path graph $P_n$ of order $n\geq k+1$, $$\dim_{k}(P_{n})=k+1.$$
\end{proposition}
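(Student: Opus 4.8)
The plan is to establish the two inequalities $\dim_k(P_n)\ge k+1$ and $\dim_k(P_n)\le k+1$ separately, exploiting the fact that on a path almost every vertex distinguishes every pair. Label $V(P_n)=\{v_1,\dots,v_n\}$ with $v_iv_{i+1}\in E(P_n)$, so that $d_{P_n}(v_i,v_j)=|i-j|$. The first step is the elementary observation that a vertex $v_\ell$ fails to distinguish $v_a$ and $v_b$ (say $a<b$) exactly when $|\ell-a|=|\ell-b|$, that is, when $\ell=(a+b)/2$; since $a\le(a+b)/2\le b$, this index names a vertex of $P_n$ when $a+b$ is even, and there is no such $\ell$ when $a+b$ is odd. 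Consequently $V(P_n)\setminus\mathcal{D}_{P_n}(u,v)$ has at most one element for every pair $u\ne v$ (in particular $|\mathcal{D}_{P_n}(u,v)|\ge n-1$, which is consistent with $P_n$ being $(n-1)$-metric dimensional).

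For the upper bound I will simply take $S=\{v_1,\dots,v_{k+1}\}$, which exists because $n\ge k+1$. Given any pair $u\ne v$, the previous step gives $|S\cap\mathcal{D}_{P_n}(u,v)|\ge|S|-1=k$, so $S$ is a $k$-metric generator and hence $\dim_k(P_n)\le k+1$.

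For the lower bound I will argue by contradiction. Suppose $S$ is a $k$-metric generator with $|S|\le k$; since every $k$-metric generator satisfies $|S|\ge k$, we get $|S|=k$. Fix an interior vertex $v_j$ with $2\le j\le n-1$ and consider the pair $v_{j-1},v_{j+1}$: by the first step, $\mathcal{D}_{P_n}(v_{j-1},v_{j+1})=V(P_n)\setminus\{v_j\}$, so $S$ must meet $V(P_n)\setminus\{v_j\}$ in at least $k$ vertices, which together with $|S|=k$ forces $v_j\notin S$. Letting $j$ run over $\{2,\dots,n-1\}$ yields $S\subseteq\{v_1,v_n\}$, hence $|S|\le 2$, contradicting $|S|=k\ge 3$. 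Therefore $\dim_k(P_n)\ge k+1$, and combining the two bounds gives the claimed equality.

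I do not anticipate a genuine obstacle; the only point needing care is that the hypothesis $k\ge 3$ is invoked precisely (and only) at the final contradiction $|S|\le 2<k$ in the lower bound — which is exactly why the statement excludes $k\in\{1,2\}$, since for $k=2$ the two end vertices $\{v_1,v_n\}$ already form a $2$-metric generator.
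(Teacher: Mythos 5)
Your proof is correct. Note, however, that the paper does not actually prove Proposition \ref{propPath}: it imports the result from \cite{Estrada-Moreno2013} without argument, adding only the observation that any $(k+1)$-set containing both leaves of $P_n$ is a $k$-metric basis. Your derivation is therefore a self-contained substitute rather than a parallel to an in-paper proof. Its engine is the right one: on a path, for each pair $v_a,v_b$ the unique candidate non-distinguishing vertex is $v_{(a+b)/2}$ (present only when $a+b$ is even), so $|V(P_n)\setminus\mathcal{D}_{P_n}(u,v)|\le 1$. This immediately gives the upper bound --- and in fact shows the slightly stronger fact that \emph{every} $(k+1)$-subset of $V(P_n)$ is a $k$-metric generator, subsuming the paper's remark about the two leaves. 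Your lower bound via the pairs $v_{j-1},v_{j+1}$, forcing a hypothetical $k$-element generator to avoid every interior vertex and hence to have at most $2<k$ elements, is also sound, and you correctly isolate where $k\ge 3$ is used (indeed $\{v_1,v_n\}$ is a $2$-metric generator, so the bound fails for $k=2$). All hypotheses ($n\ge k+1$ guaranteeing $k\le n-1$, so that $k$-metric generators exist since $P_n$ is $(n-1)$-metric dimensional) are respected.
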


We observe that, for instance, if $P_n$ is a path of order $n$ and the two leaves of $P_n$ belong to a set $S\subseteq V(P_n)$ of cardinality $k+1$, then $S$ is a $k$-metric basis of $P_n$.

We now present a function for any exterior major vertex $w\in \mathcal{M}(T)$, shown in \cite{Estrada-Moreno2013}, that allow us to compute the $k$-metric dimension of any $k\le \varsigma(T)$. Notice that this function uses the concepts already defined at the beginning of the Section \ref{k-metric-dimension-tree}. Given an integer $k\le \varsigma(T)$,

\[
I_k(w_i)=\left\{ \begin{array}{ll}
\left(ter(w_i)-1\right)\left(k-l(w_i)\right)+l(w_i), & \mbox{if } l(w_i)\le\lfloor\frac{k}{2}\rfloor,\\
& \\
\left(ter(w_i)-1\right)\lceil\frac{k}{2}\rceil+\lfloor\frac{k}{2}\rfloor, & \mbox{otherwise.}
\end{array}
\right.
\]

The following theorem presented in \cite{Estrada-Moreno2013} is the base of the two algorithms presented in this subsection.

\begin{theorem}{\em \cite{Estrada-Moreno2013}}\label{theoTreeDimR}
If $T$ is a tree which is not a path, then for any $k\in \{1,\ldots, \varsigma(T)\}$, $$\dim_{k}(T)=\sum_{w\in \mathcal{M}(T)}I_{k}(w).$$
\end{theorem}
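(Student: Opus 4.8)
The plan is to prove the two inequalities $\dim_k(T)\ge\sum_{w\in\mathcal{M}(T)}I_k(w)$ and $\dim_k(T)\le\sum_{w\in\mathcal{M}(T)}I_k(w)$ separately: the first by a localized counting argument at each exterior major vertex, the second by an explicit construction that is then checked to be a $k$-metric generator. As a preliminary I would record the elementary description of distinctive sets in a tree: for $a\ne b$ in $T$, if $d_T(a,b)$ is odd then $\mathcal{D}_T(a,b)=V(T)$, while if $d_T(a,b)$ is even with midpoint $m$ of the $a$--$b$ path, then $\mathcal{D}_T(a,b)=V(T)\setminus C_m$, where $C_m$ is the component of $m$ in the forest obtained from $T$ by deleting the two edges of the $a$--$b$ path incident with $m$; this follows at once by projecting any vertex onto the $a$--$b$ path. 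Since $T$ is $\varsigma(T)$-metric dimensional by Theorem~\ref{theoTreeDimK}, Theorem~\ref{theokmetric} gives $\min_{x,y}|\mathcal{D}_T(x,y)|=\varsigma(T)\ge k$, so a $k$-metric basis exists and all the local minimizations below are feasible.

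For the lower bound I would fix $w\in\mathcal{M}(T)$ with terminal vertices $u_1,\dots,u_t$, $t=\ter(w)\ge 2$, set $B_j=V(P(u_j,w))\setminus\{w\}$ (so $|B_j|=l(u_j,w)$, and the internal vertices of each $P(u_j,w)$ have degree $2$ in $T$), and observe that for $1\le i\le\min\{l(u_j,w),l(u_r,w)\}$ the two vertices of $B_j$ and $B_r$ at distance $i$ from $w$ have $w$ as midpoint and, by the preliminary lemma together with the degree-$2$ property, satisfy $\mathcal{D}_T(a,b)=B_j\cup B_r$. Hence any $k$-metric generator $S$ obeys $|S\cap(B_j\cup B_r)|\ge k$ for all $j\ne r$. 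A short argument, again using that internal branch vertices have degree $2$, shows that the vertex sets $\bigcup_j B_j$ associated with distinct exterior major vertices are pairwise disjoint, so $|S|\ge\sum_w\bigl|\,S\cap\bigcup_j B_j\,\bigr|$. It then remains to solve the purely combinatorial problem of minimizing $\sum_{j=1}^t s_j$ subject to $0\le s_j\le l(u_j,w)$ and $s_j+s_r\ge k$ for all $j\ne r$; splitting into the cases $l(w)\le\lfloor k/2\rfloor$ and $l(w)>\lfloor k/2\rfloor$, and using $\varsigma(w)\ge k$ for feasibility, shows this minimum equals exactly $I_k(w)$. Summing over $w$ yields $\dim_k(T)\ge\sum_w I_k(w)$.

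For the upper bound I would, for each $w$, pick an optimal $(s_1,\dots,s_t)$ for the above minimization and let $S_w$ consist, on each $B_j$, of the $s_j$ vertices of $B_j$ closest to the terminal leaf $u_j$; then $S=\bigcup_w S_w$ has $|S|=\sum_w I_k(w)$. To finish one must verify that $S$ is a $k$-metric generator, by analysing an arbitrary pair $a\ne b$ with the preliminary lemma. If $d_T(a,b)$ is odd, $\mathcal{D}_T(a,b)=V(T)$ and it suffices that $|S|\ge k+1$; this holds because $\mathcal{M}(T)$ cannot be a single vertex of terminal degree $2$ (a pendant argument, below), so either $|\mathcal{M}(T)|\ge 2$, giving $|S|\ge 2k$, or $\mathcal{M}(T)=\{w\}$ with $\ter(w)\ge 3$, giving $I_k(w)\ge k+1$. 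If $d_T(a,b)$ is even with midpoint $m$: when $\deg_T(m)=2$ we have $C_m=\{m\}$ and $|S\cap\mathcal{D}_T(a,b)|\ge|S|-1\ge k$; when $m$ is a major vertex and the two $a$--$b$-path edges at $m$ are the first edges of two branches of $m$, then $a,b$ are equidistant points on those branches $B_j,B_r$ and $|S\cap\mathcal{D}_T(a,b)|=s_j+s_r\ge k$ by the choice of the $s$'s; and if at least one of those two edges, say (by symmetry) the one on the $a$-side, is not the first edge of a branch of $m$, then the component of $a$ in $T$ minus those two edges is a pendant subtree that is not a path — were it a path, its only leaf of $T$ would be a terminal vertex of $m$, making the edge a branch edge — so the lemma below gives a vertex $w'\in\mathcal{M}(T)$ inside it; since all branches of $w'$ then lie in $V(T)\setminus C_m=\mathcal{D}_T(a,b)$ and $|S_{w'}|=I_k(w')\ge k$, we again get $|S\cap\mathcal{D}_T(a,b)|\ge k$.

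The main obstacle, and the step I expect to be most delicate, is the \emph{pendant-subtree lemma}: every subtree $A$ of $T$ attached to the rest of $T$ by a single edge and not equal to a path contains a vertex of $\mathcal{M}(T)$. I would prove it by starting at the first branching vertex of $A$ (which has degree at least $3$ in $T$) and descending greedily into $A$ through successive major vertices until reaching a major vertex $v^{*}\in A$ all of whose subtrees pointing away from the attaching edge are paths; such a $v^{*}$ has at least two terminal vertices of $T$, hence $v^{*}\in\mathcal{M}(T)$. Applying the same lemma to the subtree hanging off an exterior major vertex away from all of its branches also rules out $\mathcal{M}(T)=\{w\}$ with $\ter(w)=2$, as used above. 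Careful bookkeeping of which branches fall inside $C_m$ (they do so exactly when their major vertex lies in $C_m$) is what makes the even-distance case analysis clean and closes the argument.
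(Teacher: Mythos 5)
The paper does not actually prove Theorem~\ref{theoTreeDimR}: it is imported verbatim from \cite{Estrada-Moreno2013}, and the only trace of its proof here is the description of the sets $B_k(w)$ quoted just before Algorithm~3. So there is no in-paper argument to compare against, but your proposal is essentially the standard proof and it is correct. Your lower bound (equidistant pairs on two legs of $w$ have distinctive set exactly $B_j\cup B_r$, the legs of distinct exterior major vertices are disjoint, and the local minimization of $\sum_j s_j$ subject to $0\le s_j\le l(u_j,w)$ and $s_j+s_r\ge k$ -- feasible because $\varsigma(w)\ge\varsigma(T)\ge k$ -- evaluates to $I_k(w)$) is sound, and your upper-bound construction agrees in cardinalities with the $B_k(w)$ of the reference; as your own case analysis shows, only the counts $s_j$ per leg matter, not the placement within a leg. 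The projection/midpoint description of $\mathcal{D}_T(a,b)$ and the pendant-subtree lemma (every non-path pendant subtree contains a vertex of $\mathcal{M}(T)$, which also rules out $\mathcal{M}(T)$ being a single vertex of terminal degree $2$ and hence gives $|S|\ge k+1$) are exactly the right tools for the verification. One small imprecision: in the even-distance case where the $a$-side edge at a major midpoint $m$ is not a branch edge, you argue that the component $C_a$ cannot be a path because ``its only leaf of $T$ would be a terminal vertex of $m$''; this presumes the neighbour $x_a$ of $m$ is an endpoint of that path. If $x_a$ were an interior vertex, $C_a$ would be a path with two leaves of $T$, but then $x_a$ itself lies in $\mathcal{M}(T)$, so the conclusion you actually need ($C_a$ meets $\mathcal{M}(T)$) still holds; this is worth a sentence but is not a gap.
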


Now we consider the problem of computing the $k$-metric dimension of a tree $T$ of order $n$, different from a path, for any $k\le \varsigma(T)$.
$$\begin{tabular}{|l|}
  \hline
  \mbox{$k$-DIMENSION TREE PROBLEM}\\
  \mbox{INSTANCE: A tree $T$ of order $n$}\\
  \mbox{PROBLEM: Compute the  $k$-metric dimension of $T$, for any $k\le \varsigma(T)$ }\\
  \hline
\end{tabular}$$

\noindent\textbf{Algorithm 2: }\\
{\bf Input:} A tree $T$ different from a path rooted in a major vertex $v$.\\
{\bf Output:} The $k$-metric dimension of $T$ for any $k\le \varsigma(T)$.
\begin{enumerate}
\item For any vertex $u\in V(T)$ visited by post-order traversal as shown in Figure \ref{figTreeNotation}, assign a pair $(a_u,b_u)$ in the following way:
\begin{enumerate}
\item If $u$ does not have any child ($u$ is a leaf), then $a_u=1$ and $b_u=\infty$.
\item If $u$ has only one child ($u$ has degree $2$), then $a_u=a_{u'}+1$ and $b_u=b_{u'}$, where the pair $(a_{u'},b_{u'})$ was assigned to the child vertex of $u$. Note that $a_{u'}$ can be $\infty$, in which case $a_u=\infty$.
\item If $u$ has at least two children ($u$ is a major vertex), then $a_u=\infty$. Let $a_{min}$ be the minimum value among all $a_{u_i}$'s in the pairs $(a_{u_i},b_{u_i})$ assigned to the children of $u$, let $c_{u}$ be the number of labels $a_{u_i}$ different from $\infty$, and let $s_u$ be the sum of all $b_{u_i}\ne\infty$. If $c_u\le 1$, then $b_u=s_u$. If $c_u\ge 2$ and $a_{min}\le\lfloor\frac{r}{2}\rfloor$, then $\displaystyle b_u=a_{min}+(c_u-1)(r-a_{min})+s_u$. If $c_u\ge 2$ and $a_{min}>\lfloor\frac{r}{2}\rfloor$, then $\displaystyle b_u=\left\lfloor\frac{r}{2}\right\rfloor+(c_u-1)\left\lceil\frac{r}{2}\right\rceil+s_u$.
\end{enumerate}
\item The $k$-metric dimension of $T$ is $b_v$.
\end{enumerate}

Figure \ref{figTreeDimension} shows an example of a run of Algorithm 1 for the tree shown in Figure \ref{figTreeNotation}.

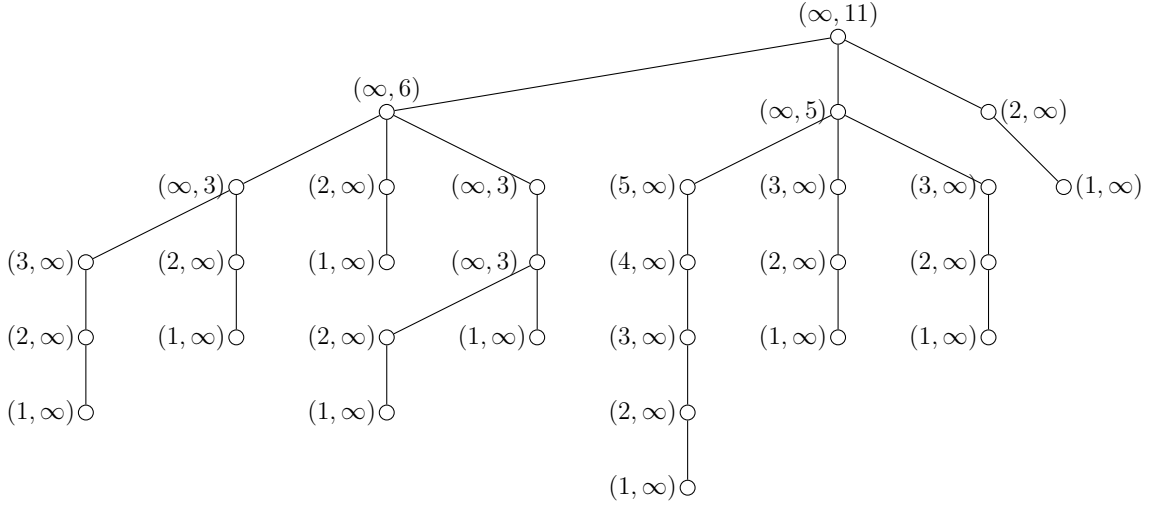
\begin{figure}[!ht]
\centering
\begin{tikzpicture}[transform shape, inner sep = .7mm]
\node [draw=black, shape=circle, fill=white] (v0) at (0,0) {};
\node [draw=black, shape=circle, fill=white] (v01) at (-6,-1) {};
\node [draw=black, shape=circle, fill=white] (v02) at (0,-1) {};
\node [draw=black, shape=circle, fill=white] (v03) at (2,-1) {};
\node [draw=black, shape=circle, fill=white] (v11) at (-8,-2) {};
\node [draw=black, shape=circle, fill=white] (v12) at (-4,-2) {};
\node [draw=black, shape=circle, fill=white] (v13) at (-6,-2) {};
\node [draw=black, shape=circle, fill=white] (v111) at (-10,-3) {};
\node [draw=black, shape=circle, fill=white] (v112) at (-8,-3) {};
\node [draw=black, shape=circle, fill=white] (v1111) at (-10,-4) {};
\node [draw=black, shape=circle, fill=white] (v11111) at (-10,-5) {};
\node [draw=black, shape=circle, fill=white] (v1121) at (-8,-4) {};
\node [draw=black, shape=circle, fill=white] (v121) at (-4,-3) {};
\node [draw=black, shape=circle, fill=white] (v1211) at (-6,-4) {};
\node [draw=black, shape=circle, fill=white] (v1212) at (-4,-4) {};
\node [draw=black, shape=circle, fill=white] (v12111) at (-6,-5) {};
\node [draw=black, shape=circle, fill=white] (v131) at (-6,-3) {};
\node [draw=black, shape=circle, fill=white] (v21) at (-2,-2) {};
\node [draw=black, shape=circle, fill=white] (v22) at (0,-2) {};
\node [draw=black, shape=circle, fill=white] (v23) at (2,-2) {};
\node [draw=black, shape=circle, fill=white] (v211) at (-2,-3) {};
\node [draw=black, shape=circle, fill=white] (v2111) at (-2,-4) {};
\node [draw=black, shape=circle, fill=white] (v21111) at (-2,-5) {};
\node [draw=black, shape=circle, fill=white] (v211111) at (-2,-6) {};
\node [draw=black, shape=circle, fill=white] (v221) at (0,-3) {};
\node [draw=black, shape=circle, fill=white] (v2211) at (0,-4) {};
\node [draw=black, shape=circle, fill=white] (v231) at (2,-3) {};
\node [draw=black, shape=circle, fill=white] (v2311) at (2,-4) {};
\node [draw=black, shape=circle, fill=white] (v31) at (3,-2) {};

\draw (v0) -- (v01) -- (v11) -- (v111) -- (v1111) -- (v11111);
\draw (v11) -- (v112) -- (v1121);
\draw (v01) -- (v12) -- (v121) -- (v1211) -- (v12111);
\draw (v01) -- (v13) -- (v131);
\draw (v121) -- (v1212);
\draw (v0) -- (v02) -- (v21) -- (v211) -- (v2111) -- (v21111) -- (v211111);
\draw (v02) -- (v22) -- (v221) -- (v2211);
\draw (v02) -- (v23) -- (v231) -- (v2311);
\draw (v0) -- (v03) -- (v31);

\node [scale=.8] at ([xshift=-.6 cm]v11111) {$(1,\infty)$};
\node [scale=.8] at ([xshift=-.6 cm]v1111) {$(2,\infty)$};
\node [scale=.8] at ([xshift=-.6 cm]v111) {$(3,\infty)$};
\node [scale=.8] at ([xshift=-.6 cm]v1121) {$(1,\infty)$};
\node [scale=.8] at ([xshift=-.6 cm]v112) {$(2,\infty)$};
\node [scale=.8] at ([xshift=-.6 cm]v11) {$(\infty,3)$};
\node [scale=.8] at ([xshift=-.6 cm]v131) {$(1,\infty)$};
\node [scale=.8] at ([xshift=-.6 cm]v13) {$(2,\infty)$};
\node [scale=.8] at ([xshift=-.6 cm]v12111) {$(1,\infty)$};
\node [scale=.8] at ([xshift=-.6 cm]v1211) {$(2,\infty)$};
\node [scale=.8] at ([xshift=-.6 cm]v1212) {$(1,\infty)$};
\node [scale=.8] at ([xshift=-.7 cm]v121) {$(\infty,3)$};
\node [scale=.8] at ([xshift=-.7 cm]v12) {$(\infty,3)$};
\node [scale=.8] at ([yshift=.3 cm]v01) {$(\infty,6)$};
\node [scale=.8] at ([xshift=-.6 cm]v211111) {$(1,\infty)$};
\node [scale=.8] at ([xshift=-.6 cm]v21111) {$(2,\infty)$};
\node [scale=.8] at ([xshift=-.6 cm]v2111) {$(3,\infty)$};
\node [scale=.8] at ([xshift=-.6 cm]v211) {$(4,\infty)$};
\node [scale=.8] at ([xshift=-.6 cm]v21) {$(5,\infty)$};
\node [scale=.8] at ([xshift=-.6 cm]v2211) {$(1,\infty)$};
\node [scale=.8] at ([xshift=-.6 cm]v221) {$(2,\infty)$};
\node [scale=.8] at ([xshift=-.6 cm]v22) {$(3,\infty)$};
\node [scale=.8] at ([xshift=-.6 cm]v2311) {$(1,\infty)$};
\node [scale=.8] at ([xshift=-.6 cm]v231) {$(2,\infty)$};
\node [scale=.8] at ([xshift=-.6 cm]v23) {$(3,\infty)$};
\node [scale=.8] at ([xshift=-.6 cm]v02) {$(\infty,5)$};
\node [scale=.8] at ([xshift=.6 cm]v31) {$(1,\infty)$};
\node [scale=.8] at ([xshift=.6 cm]v03) {$(2,\infty)$};
\node [scale=.8] at ([yshift=.3 cm]v0) {$(\infty,11)$};
\end{tikzpicture}
\caption{Algorithm 2 yields that $3$-metric dimension of this tree is $11$.}
\label{figTreeDimension}
\end{figure}

\begin{remark}
Let $T$ be a tree different from a path. Algorithm 2 computes the $k$-metric dimension of $T$ for any $k\le \varsigma(T)$.
\end{remark}

\begin{proof}
Let $v$ be the major vertex taken as a root of the tree $T$ different from a path and let $(a,b)$ be the pair stored in $v$ once Algorithm 2 has been executed. We shall show that $\displaystyle b_v=\sum_{v'\in\mathcal{M}(T)}I_k(v')$. Since $v$ is a major vertex, it has at least three children. Let $t\ge 3$ be the number of children of $v$ and let $S_1,\ldots,S_t$ be the subtrees whose roots are the children $v_1,\ldots,v_t$ of $v$, respectively. We differentiate two cases:
\begin{enumerate}
\item There exist at least two subtrees that are paths. In this case $v\in\mathcal{M}(T)$. Let $S_1,\ldots,S_{t'}$ be the subtrees that are paths, where $2\le t'\le t$. In this case, once executed Algorithm 2, each root $v_i$ of $S_i$, $1\le i\le t'$, stores the pair $(a_{v_i},\infty)$, where $a_{v_i}$ is the number of vertices of $S'_i$. Note that in this case $\ter(v)=c_v=t'\ge 2$ and $l(v)=a_{min}$. If $a_{min}\le\lfloor\frac{k}{2}\rfloor$, then $I_k(v)=a_{min}+(c_v-1)(k-a_{min})$. Otherwise, $I_k(v)=\lfloor\frac{k}{2}\rfloor+(c_v-1)\lceil\frac{k}{2}\rceil$. If $t'=t$, then $v$ is the only exterior major vertex and $s_v=0$. As a consequence, Algorithm 2 has assigned to $v$ the pair $(a_v,b_v)=(\infty,I_k(v))$. Assume that $t'<t$. Thus, there exists at least one subtree that is not path. Let $S_{t'+1},\ldots,S_t$ be the subtrees that are not paths. We consider the root $v_i$ of $S_i$, $t'+1\le i\le t$. If $v_i$ is a major vertex, then we take the vertex $v_i'=v_i$. Otherwise, $v_i'$ is the first descendant of $v_i$ that is a major vertex. In this case, Algorithm 2 recursively assigns to $v_i'$ the pair $(\infty, b_{v_i'})$, where $\displaystyle b_{v_i'}=\sum_{v'\in\mathcal{M}(T)\cap V(S_i)}I_k(v')$. In both cases, $b_{v_i}=b_{v_i'}$, where $(\infty, b_{v_i})$ is the pair assigned to $v_i$ by Algorithm 2. Hence, $\displaystyle s_v=\sum_{v'\in\mathcal{M}(T)-\{v\}}I_k(v')$. Therefore, the execution of Algorithm 2 assigns to $v$ the pair $\displaystyle (a_v,b_v)=\left(\infty, I_k(v)+\sum_{v'\in\mathcal{M}(T)-\{v\}}I_k(v')\right)=\left(\infty, \sum_{v'\in\mathcal{M}(T)}I_k(v')\right)$.
\item There exists at most one subtree that is a path. In this case $v\notin\mathcal{M}(T)$ and $c_v\le 1$. Let $S_1,\ldots,S_t'$ be the subtrees that are not paths, where $1\le t'\le t$. For each root $v_i$ of $S_i$, $1\le i\le t'$, if $v_i$ is a major vertex, then we take the vertex $v_i'=v_i$. Otherwise, $v_i'$ is the first descendant of $v_i$ that is a major vertex. In this case, Algorithm 2 recursively assigns to $v_i'$ the pair $(\infty, b_{v_i'})$, where $\displaystyle b_{v_i'}=\sum_{v'\in\mathcal{M}(T)\cap V(S_i)}I_k(v')$. In both cases, $b_{v_i}=b_{v_i'}$, where $(\infty, b_{v_i})$ is the pair stored in $v_i$ by an execution of Algorithm 2. Hence, $\displaystyle s_v=\sum_{v'\in\mathcal{M}(T)}I_k(v')$. Note in this case, at most one of all the $a_{v_i}$'s belonging to the pairs $(a_{v_i},b_{v_i})$ assigned to the children of $v$ is different from infinity. As a consequence, $c_v\le 1$. Therefore, Algorithm 2 assigns to $v$ the pair $\displaystyle (a_v,b_v)=\left(\infty, \sum_{v'\in\mathcal{M}(T)}I_k(v')\right)$.
\end{enumerate}
In any case, $\displaystyle b_v=\sum_{v'\in\mathcal{M}(T)}I_k(v')$, and the result follows.
\end{proof}

\begin{corollary}
The $k$-metric dimension of any tree different from a path, for any $k\le \varsigma(T)$, can be computed in linear time with respect to the order of $T$.
\end{corollary}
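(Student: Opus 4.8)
The plan is to verify that Algorithm~2, whose correctness as a method for computing $\dim_k(T)$ is established in the preceding remark (via Theorem~\ref{theoTreeDimR}), admits an implementation running in time $O(n)$, where $n=|V(T)|$; since correctness is already in hand, only the time analysis remains to be done.

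First I would dispose of the preprocessing. A tree different from a path has a vertex of degree at least three, and one such vertex $v$ is found by a single scan of the adjacency lists in $O(n)$ time; we take it as the root. A depth-first search from $v$ then computes, in $O(n)$ time, the parent pointer and child list of every vertex together with a post-order enumeration of $V(T)$ --- precisely the order in which Step~1 of Algorithm~2 processes the vertices.

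Next I would bound the cost of Step~1 vertex by vertex. A leaf or a degree-two vertex is handled in $O(1)$ time: the pair $(a_u,b_u)$ is read off from the (already assigned) pair of the unique child. A major vertex $u$ with $d_u$ children is handled in $O(d_u)$ time: one pass over the children's pairs yields the minimum finite value $a_{min}$, the number $c_u$ of finite $a$-labels, and the sum $s_u$ of the finite $b$-labels, after which $b_u$ is given by one of three constant-time arithmetic formulas involving integers of size $O(n)$. Summing over all vertices, the total cost of Step~1 is $O\!\left(\sum_{u\in V(T)}(1+d_u)\right)=O\bigl(n+|E(T)|\bigr)=O(n)$, since $T$ is a tree. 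Step~2 returns $b_v$ in constant time, so the algorithm runs in linear time overall.

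The one point that needs care --- the ``main obstacle'', modest as it is --- is the passage from the non-constant per-vertex cost at major vertices to a linear total: since the children of distinct vertices form disjoint sets, $\sum_{u}d_u=|E(T)|=n-1$, and it is this identity that collapses the bookkeeping at high-degree vertices into a linear aggregate. It is also worth emphasizing that, in contrast with the $O(n^3)$ estimate of Remark~\ref{remark_K-Dimensional}, no distance matrix is computed here at all: Algorithm~2 works solely with the rooted-tree structure (degrees, parent/child relations, post-order), so the cubic cost of the Floyd--Warshall step is entirely avoided.
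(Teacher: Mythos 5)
Your proposal is correct and follows the same (implicit) reasoning as the paper, which states the corollary without a written proof precisely because it reduces to the standard observation that Algorithm~2 is a single post-order traversal doing $O(1+\deg(u))$ work at each vertex $u$, which sums to $O(n)$ over a tree. Your explicit accounting of the preprocessing, the per-vertex costs, and the identity $\sum_u \deg(u)=2|E(T)|=2(n-1)$ fills in exactly the details the authors leave to the reader.
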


Now we consider the problem of finding a $k$-metric basis of a tree different from a path for any $k\le \varsigma(T)$. To this end, we present an algorithm quite similar to Algorithm 2, which is based on the $k$-metric basis of $T$ proposed in the proof of Theorem \ref{theoTreeDimR}.

$$\begin{tabular}{|l|}
  \hline
  \mbox{$k$-METRIC BASIS TREE PROBLEM}\\
  \mbox{INSTANCE: A tree $T$ of order $n$ different from a path}\\
  \mbox{PROBLEM: Find a $k$-metric basis of $T$, for any $k\le \varsigma(T)$}\\
  \hline
\end{tabular}$$

\noindent\textbf{Algorithm 3: }\\
{\bf Input:} A tree $T$ different from a path rooted in a major vertex $v$.\\
{\bf Output:} A $k$-metric basis of $T$ for any $k\le \varsigma(T)$.
\begin{enumerate}
\item For any vertex $u\in V(T)$ visited by post-order traversal as shown in Figure \ref{figTreeNotation}, assign a pair $(a_u,b_u)$ in the following way:
\begin{enumerate}
\item If $u$ does not have any child ($u$ is a leaf), then $a=\{u\}$ and $b=\emptyset$.
\item If $u$ has only one child ($u$ has degree $2$), then $b_u=b_{u'}$, where the pair $(a_{u'},b_{u'})$ was assigned to the child vertex of $u$. If $a_{u'}=\emptyset$, then $a_u=\emptyset$. If $a_{u'}\ne\emptyset$, then $a_u=a_{u'}\cup\{u\}$.
\item If $u$ has at least two children ($u$ is a major vertex), then $a_u=\emptyset$. Let $a_{min}$ be a set of minimum cardinality among all $a_{u_i}$ belonging to the pairs $(a_{u_i},b_{u_i})$ assigned to the children of $u$, let $c_u$ be the number of $a_{u_i}$ which are different from an empty set, and let $d_u$ be the union of all $b_{u_i}$. If $c_u\le 1$, then $b_u=d_u$. If $c_u\ge 2$ and $|a_{min}|\le\lfloor\frac{k}{2}\rfloor$, then we remove elements of each $a_{u_i}\ne a_{min}$ until its cardinality is $k-|a_{min}|$. If $c_u\ge 2$ and $|a_{min}|>\lfloor\frac{k}{2}\rfloor$, then we remove elements of each $a_{u_i}\ne a_{min}$ until its cardinality is $\lceil\frac{k}{2}\rceil$, and we remove elements of $a_{min}$ until its cardinality is $\lfloor\frac{k}{2}\rfloor$. Then $\displaystyle b_u=a_{min}\cup\left(\bigcup_{a_{u_i}\ne a_{min}}a_{u_i}\right)\cup d_u$.
\end{enumerate}
\item A $k$-metric basis of $T$ is stored in $b_v$.
\end{enumerate}

\begin{remark}
Let $T$ be a tree different from a path. Algorithm 3 finds an $k$-metric basis of $T$ for any $k\le \varsigma(T)$.
\end{remark}

\begin{proof}
Given an exterior major vertex $w\in\mathcal{M}(T)$ such that $u_1,u_2,\ldots,u_t$ are its terminal vertices and $l(w)=l(u_{min},w)$, we define the vertex set $B_k(w)$ in the following way. If $l(v)\le\lfloor\frac{k}{2}\rfloor$, then $|B_k(w)\cap (V(P(u_j,w))-\{w\})|=k-l(v)$, for any $j\ne min$, and $V(P(u_{min},w))-\{w\}\subset B_k(w)$. Otherwise, $|B_k(w)\cap (V(P(u_j,w))-\{w\})|=\lceil\frac{k}{2}\rceil$, for any $j\ne min$, and $|B_k(w)\cap (V(P(u_{min},w))-\{w\})|=\lfloor\frac{k}{2}\rfloor$. It was shown in \cite{Estrada-Moreno2013}, that $\bigcup_{w\in\mathcal{M}(T)}B_k(w)$ is a $k$-metric basis of $T$. Let $v$ be the major vertex taken as a root of the tree $T$ different from a path, and let $(a_v,b_v)$ be the pair assigned to $v$ once executed Algorithm 3. We show that the vertex set $b_v=\bigcup_{w\in\mathcal{M}(T)}B_k(w)$. Since $v$ is a major vertex, it has at least three children. Let $t\ge 3$ be the number of children of $v$ and let $S_1,\ldots,S_t$ be the subtrees whose roots are the children $v_1,\ldots,v_t$ of $v$, respectively. We differentiate two cases:
\begin{enumerate}
\item There exist at least two subtrees that are paths. In this case $v\in\mathcal{M}(T)$. Let $S_1,\ldots,S_{t'}$ be the subtrees that are paths, where $2\le t'\le t$. Hence, Algorithm 3 assigns to each root $v_i$ of $S_i$, $1\le i\le t'$, the pair $(a_{v_i},\emptyset)$, where $a_{v_i}=V(S_i)$. Note that in that situation $\ter(v)=c_v=t'\ge 2$ and $l(v)=|a_{min}|$. If $t'=t$, then $v$ is the only exterior major vertex and $d_v=\emptyset$. As a consequence, Algorithm 3 assigns to $v$ the pair $(\emptyset, B_k(v))$. Assume now that $t'<t$. Thus, there exists at least one subtree that is not a path. Let $S_{t'+1},\ldots,S_t$ be the subtrees that are not paths. For each root $v_i$ of $S_i$, $t'+1\le i\le t$, if $v_i$ is a major vertex, then we take the vertex $v_i'=v_i$. Otherwise, $v_i'$ is the first descendant of $v_i$ that is a major vertex. Hence, Algorithm 3 recursively stores in $v_i'$ the pair $(\infty, b_{v_i'})$, where $\displaystyle b_{v_i'}=\bigcup_{v'\in\mathcal{M}(T)\cap V(S_i)}B_k(v')$. In both cases, $b_{v_i}=b_{v_i'}$, where $(\infty, b_{v_i})$ is the pair stored in $v_i$ by Algorithm 3. Hence, $\displaystyle d_v=\bigcup_{v'\in\mathcal{M}(T)-\{v\}}B_k(v')$. Therefore, Algorithm 3 assigns to $v$ the pair $\displaystyle (a_v,b_v)=\left(\emptyset, B_k(v)\cup\bigcup_{v'\in\mathcal{M}(T)-\{v\}}B_k(v')\right)=\left(\emptyset, \bigcup_{v'\in\mathcal{M}(T)}B_k(v')\right)$.
\item There exists at most one subtree that is a path. In this case $v\notin\mathcal{M}(T)$ and $c_v\le 1$. Let $S_1,\ldots,S_t'$ be the subtrees that are not paths, where $1\le t'\le t$. For each root $v_i$ of $S_i$, $1\le i\le t'$, if $v_i$ is a major vertex, then we take the vertex $v_i'=v_i$. Otherwise, $v_i'$ is the first descendant of $v_i$ that is a major vertex. Hence, Algorithm 3 has recursively assigned to $v_i'$ the pair $(\infty, b_{v_i'})$, where $\displaystyle b_{v_i'}=\bigcup_{v'\in\mathcal{M}(T)\cap V(S_i)}B_k(v')$. Again, $b_{v_i}=b_{v_i'}$, where $(\infty, b_{v_i})$ is the pair stored in $v_i$ by Algorithm 3. Thus, $\displaystyle d_v=\bigcup_{v'\in\mathcal{M}(T)}B_k(v')$. Note in such case, at most one of all possible $a_{v_i}$'s belonging to the pairs $(a_{v_i},b_{v_i})$ assigned to the children of $v$ is different from infinity. As a consequence, $c_v\le 1$. Therefore, Algorithm 3 assigns to $v$ the pair $\displaystyle (a_v,b_v)=\left(\emptyset,\bigcup_{v'\in\mathcal{M}(T)}B_k(v')\right)$.
\end{enumerate}
In any case, $\displaystyle b_v=\bigcup_{v'\in\mathcal{M}(T)}B_k(v')$, and the result follows.
\end{proof}

\begin{corollary}
A $k$-metric basis of any tree different from a path, for any $k\le \varsigma(T)$, can be computed in linear time with respect to the order of $T$.
\end{corollary}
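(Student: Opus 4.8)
The plan is to notice that the correctness of Algorithm~3 has already been established in the preceding remark, so only the running time remains to be bounded. The control structure of Algorithm~3 is a single post-order traversal of $T$, which visits each vertex once; the only thing that could spoil linearity is the manipulation of the sets $a_u,b_u$ carried at each vertex, because a literal reading of step~(c) --- ``remove elements of each $a_{u_i}$ until its cardinality is $\ldots$'' and ``$b_u=a_{min}\cup\bigl(\bigcup_{a_{u_i}\ne a_{min}}a_{u_i}\bigr)\cup d_u$'' --- appears to copy entire sets, which would be quadratic. The core of the proof is therefore to exhibit a representation of these sets for which every such operation costs $O(1)$ or can be charged either to an edge of $T$ or to a vertex that is permanently deleted.

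Concretely, I would represent every set $a_u$ and $b_u$ as a doubly linked list equipped with a size counter and a tail pointer, and I would pass these lists by reference rather than by value. Then a leaf creates a singleton list in $O(1)$; a degree-two vertex $u$ prepends $u$ to the list $a_{u'}$ (updating the counter) and lets $b_u$ point at $b_{u'}$, again $O(1)$; and a major vertex $u$ with children $v_1,\dots,v_t$ scans the $t$ lists $a_{v_1},\dots,a_{v_t}$ to locate one of minimum cardinality using the counters, at cost $O(t)$, then trims each of the remaining lists (and, if needed, $a_{min}$) to the prescribed cardinality by deleting elements from one end, and finally builds $b_u$ by concatenating the trimmed lists together with all the lists $b_{v_i}$, each concatenation being $O(1)$ thanks to the tail pointers, so $O(t)$ in all.

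For the amortized count: every vertex $w$ of $T$ is inserted into exactly one $a$-list, namely when it is visited, and once $w$ has been removed from a list by a trimming step it is never reinserted, so the total cost of all trimming over the whole run is $O(n)$. The residual overhead at a major vertex $u$ is $O(\delta(u))$, and $\sum_{u\in V(T)}\delta(u)=2|E(T)|=2(n-1)$, so this too sums to $O(n)$. A further remark legitimises the concatenations: for distinct children $v_i$ the subtrees $S_i$ are vertex-disjoint, and the path segments from which the $a$-lists are formed are pairwise disjoint and disjoint from the lower $b$-lists, so each ``$\cup$'' appearing in Algorithm~3 is a genuine disjoint union that list concatenation realises without any duplicate elimination. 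Finally the preprocessing --- verifying that $T$ is not a path, choosing a major vertex as the root, and, if $\varsigma(T)$ is wanted, invoking Algorithm~1 --- is itself $O(n)$ by the earlier corollary. Summing all these contributions gives a total running time of $O(n)$, which is the claim. The one genuinely delicate point, and the step I expect to demand the most care in the written proof, is \emph{precisely} the justification that the set operations of step~(c) can be implemented with pass-by-reference linked lists so that no set is ever copied wholesale; everything else is routine bookkeeping.
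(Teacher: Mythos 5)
Your argument is correct, and it is in fact considerably more careful than what the paper provides: the corollary is stated there with no proof at all, the intended justification being simply that Algorithm~3 performs a single post-order traversal of $T$. You have correctly identified that this implicit argument has a hidden assumption --- a naive implementation of the set operations in step~(c) (copying $a$-lists, forming unions, repeated trimming) could cost $\Theta(n)$ per major vertex and hence $\Theta(n^2)$ overall --- and you have supplied exactly the data-structure argument needed to close that gap: pass-by-reference linked lists with size counters and tail pointers, $O(1)$ concatenation justified by the disjointness of the subtrees $S_i$, and an amortized charge for trimming based on the observation that each vertex enters exactly one $a$-list and, once deleted, is never reinserted, together with the $\sum_u \delta(u) = 2(n-1)$ bound for the per-major-vertex overhead. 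This is a complete and correct proof of the linear-time claim; the paper's version buys brevity by treating these implementation issues as routine, while yours makes the corollary genuinely self-contained.
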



\begin{thebibliography}{10}
\expandafter\ifx\csname url\endcsname\relax
  \def\url#1{\texttt{#1}}\fi
\expandafter\ifx\csname urlprefix\endcsname\relax\def\urlprefix{URL }\fi

\bibitem{Bailey2011}
R.~F. Bailey, K.~Meagher, On the metric dimension of grassmann graphs, Discrete
  Mathematics \& Theoretical Computer Science 13~(4) (2011) 97--104.
\newline\urlprefix\url{http://www.dmtcs.org/dmtcs-ojs/index.php/dmtcs/article/view/2049}

\bibitem{Brigham2003}
R.~C. Brigham, G.~Chartrand, R.~D. Dutton, P.~Zhang, Resolving domination in
  graphs, Mathematica Bohemica 128~(1) (2003) 25--36.
\newline\urlprefix\url{http://mb.math.cas.cz/mb128-1/3.html}

\bibitem{Caceres2007}
J.~C\'{a}ceres, C.~Hernando, M.~Mora, I.~M. Pelayo, M.~L. Puertas, C.~Seara,
  D.~R. Wood, On the metric dimension of cartesian product of graphs, SIAM
  Journal on Discrete Mathematics 21~(2) (2007) 423--441.
\newline\urlprefix\url{http://epubs.siam.org/doi/abs/10.1137/050641867}

\bibitem{Chartrand2000}
G.~Chartrand, L.~Eroh, M.~A. Johnson, O.~R. Oellermann, Resolvability in graphs
  and the metric dimension of a graph, Discrete Applied Mathematics 105~(1-3)
  (2000) 99--113.
\newline\urlprefix\url{http://dx.doi.org/10.1016/S0166-218X(00)00198-0}

\bibitem{Chartrand2003}
G.~Chartrand, V.~Saenpholphat, P.~Zhang, The independent resolving number of a
  graph, Mathematica Bohemica 128~(4) (2003) 379--393.
\newline\urlprefix\url{http://mb.math.cas.cz/mb128-4/4.html}

\bibitem{Estrada-Moreno2013}
A.~Estrada-Moreno, J.~A. Rodr\'{\i}guez-Vel\'{a}zquez, I.~G. Yero, The
  $k$-metric dimension of a graph, Applied Mathematics \& Information Sciences
  9~(6) (2015) 2829--2840.
\newline\urlprefix\url{http://naturalspublishing.com/files/published/05a21265hsd7y2.pdf}

\bibitem{Harary1976}
F.~Harary, R.~A. Melter, On the metric dimension of a graph, Ars Combinatoria 2
  (1976) 191--195.
\newline\urlprefix\url{http://www.ams.org/mathscinet-getitem?mr=0457289}

\bibitem{Johnson1993}
M.~Johnson, Structure-activity maps for visualizing the graph variables arising
  in drug design, Journal of Biopharmaceutical Statistics 3~(2) (1993)
  203--236, pMID: 8220404.
\newline\urlprefix\url{http://www.tandfonline.com/doi/abs/10.1080/10543409308835060}

\bibitem{Johnson1998}
M.~Johnson, Browsable structure-activity datasets, in: R.~Carb\'{o}-Dorca,
  P.~Mezey (eds.), Advances in Molecular Similarity, chap.~8, JAI Press Inc,
  Stamford, Connecticut, 1998, pp. 153--170.
\newline\urlprefix\url{http://books.google.es/books?id=1vvMsHXd2AsC}

\bibitem{Khuller1996}
S.~Khuller, B.~Raghavachari, A.~Rosenfeld, Landmarks in graphs, Discrete
  Applied Mathematics 70~(3) (1996) 217--229.
\newline\urlprefix\url{http://www.sciencedirect.com/science/article/pii/0166218X95001062}

\bibitem{Kuziak2013}
D.~Kuziak, I.~G. Yero, J.~A. Rodr\'{\i}guez-Vel\'{a}zquez, On the strong metric
  dimension of corona product graphs and join graphs, Discrete Applied
  Mathematics 161~(7--8) (2013) 1022--1027.
\newline\urlprefix\url{http://www.sciencedirect.com/science/article/pii/S0166218X12003897}

\bibitem{Melter1984}
R.~A. Melter, I.~Tomescu, Metric bases in digital geometry, Computer Vision,
  Graphics, and Image Processing 25~(1) (1984) 113--121.
\newline\urlprefix\url{http://www.sciencedirect.com/science/article/pii/0734189X84900513}

\bibitem{Okamoto2010}
F.~Okamoto, B.~Phinezy, P.~Zhang, The local metric dimension of a graph,
  Mathematica Bohemica 135~(3) (2010) 239--255.
\newline\urlprefix\url{http://dml.cz/dmlcz/140702}

\bibitem{Rall1984}
D.~F. Rall, P.~J. Slater, On location-domination numbers for certain classes of
  graphs, Congressus Numerantium 45 (1984) 97--106.
\newline\urlprefix\url{http://www.ams.org/mathscinet-getitem?mr=0777715}

\bibitem{Sebo2004}
A.~Seb\"{o}, E.~Tannier, On metric generators of graphs, Mathematics of
  Operations Research 29~(2) (2004) 383--393.
\newline\urlprefix\url{http://dx.doi.org/10.1287/moor.1030.0070}

\bibitem{Slater1975}
P.~J. Slater, Leaves of trees, Congressus Numerantium 14 (1975) 549--559.

\bibitem{Yero2011}
I.~G. Yero, D.~Kuziak, J.~A. Rodr\'{\i}quez-Vel\'{a}zquez, On the metric
  dimension of corona product graphs, Computers \& Mathematics with
  Applications 61~(9) (2011) 2793--2798.
\newline\urlprefix\url{http://www.sciencedirect.com/science/article/pii/S0898122111002094}

\end{thebibliography}
\end{document}